\theoremstyle{definition}
\theoremstyle{remark}
\numberwithin{equation}{section}
\newcommand{\set}[1]{\left\{#1\right\}}
\newcommand{\suchthat}{\ensuremath{\, \vert \,}}
\newcommand{\R}{\mathbb R}
\newcommand{\C}{\mathbb C}
\newcommand{\Z}{\mathbb Z}
\newcommand{\LL}{ {\mathcal L} }
\def\TagOnRight
\def\R {\mathbb{R}}
\newcommand{\be}{\begin{equation}}
\newcommand{\ee}{\end{equation}}
\newcommand{\bea}{\begin{eqnarray}}
\newcommand{\eea}{\end{eqnarray}}
\newcommand{\Bea}{\begin{eqnarray*}}
\newcommand{\Eea}{\end{eqnarray*}}
\newcommand{\bt}{\begin{Theorem}}
\newcommand{\et}{\end{Theorem}}
\newcommand{\bpr}{\begin{Proposition}}
\newcommand{\epr}{\end{Proposition}}
\newcommand{\bl}{\begin{Lemma}}
\newcommand{\el}{\end{Lemma}}
\newcommand{\bi}{\begin{itemize}}
\newcommand{\ei}{\end{itemize}}
\newtheorem{Definition}{Definition}[section]
\newtheorem{Theorem}[Definition]{Theorem}
\newtheorem{Lemma}[Definition]{Lemma}
\newtheorem{Proposition}[Definition]{Proposition}
\begin{document}
\baselineskip16pt

\title[]{ Nonlinear Schr\"{o}dinger equation for the twisted Laplacian in the critical case}
\author{Vijay Kumar Sohani}%
\address {Harish-Chandra Research Institute, Allahabad-211019
 India}
 \email { sohani@hri.res.in}
\subjclass[2010]{Primary 42B37, Secondary 35G20, 35G25}
\keywords{ Twisted Laplacian(special Hermite operator), Nonlinear Schr\"{o}dinger equation, 
Strichartz estimates, well posedness }%

\begin{abstract} In \cite{pkrvks} and \cite{pkrvks1}, we prove well-posedness of solution to the nonlinear Schr\"{o}dinger equation associated 
to the twisted Laplacian on $\C^n$ for a  general class of 
nonlinearities including power type
with subcritical case $0\leq \alpha<\frac{2}{n-1}$. In this paper,
we consider critical case $\alpha=\frac{2}{n-1}$ with $n\geq 2$. Our approach is based on truncation of the given nonlinearity $G$, which is used in \cite{CW1}. We obtain solution for the truncated problem. We obtain solution to the 
original problem by passing to the limit.
  \end{abstract}

\maketitle
\section{Introduction} 
We consider the initial value problem for the nonlinear 
Schr\"{o}dinger equation for the twisted Laplacian 
$\LL$ : \bea \label{cmainpde}i\partial_tu(z,t)\!\!\!\!& -\!\!\!&\LL
u(z,t)=G(z, u), ~~~~~~~~~~~~z \in \C^n, ~t \in \R \\
\label{cinidata}&& u(z,t_0)= f(z). \eea
Here we consider the nonlinearity $G$ of the form
\bea\label{nlc1}G(z,w) = \psi(x,y,|w|)\, w, ~ (x,y,w)
\in \R^n\times \R^n  \times \C,\eea where $z=x+iy
\in\C^n,w\in\C $ and $\psi
\in C(\R^n\times \R^n \times [0,\infty))\cap C^1(\R^n\times 
\R^n \times (0,\infty))$ satisfy the following inequality
\bea \label{nlc2} |F(x,y,\eta)| \leq C|\eta|^ {\alpha}
\eea 
 with $F= \psi, \partial_{x_j} \psi $, $ \partial_{y_j}\psi$ 
($1\leq j \leq n$) and $\eta\partial_{\eta}\psi(x,y, \eta)$, $\alpha=\frac{2}{n-1}$ with $n\geq 2$ and for some constant $C$. 

In \cite{pkrvks1}, we consider subcritical case $0\leq \alpha< \frac{2}{n-1}$ 
for initial value in $\tilde{W}_{\LL}^{1,2}(\C^n)$. Sobolev space 
$\tilde{W}_{\LL}^{1,p}(\C^n)$ is introduced in \cite{pkrvks1}.
In this paper, we consider the critical case $\alpha=\frac{2}{n-1}$. In subcritical case
$0\leq \alpha< \frac{2}{n-1}$ for each $\alpha$, we have some $q>2$ such that $(q,2+\alpha)$ be an admissible pair (see Definition 3.1 in \cite{pkrvks1}), which is not the case when $\alpha=\frac{2}{n-1}$. We overcome this difficulty by considering admissible 
pair $(\gamma,\rho)$ and by using embedding theorem (Lemma \ref{embedding1}), where 
\Bea \rho=\frac{2n^2}{n^2-n+1}, ~~ \gamma=\frac{2n}{n-1}.
\Eea
To treat the critical case, we adopt truncation argument method of Cazenave 
and Weissler \cite{CW1}. 
To prove local existence, we truncate the given nonlinearity $G$ 
and obtain solution for the truncated problem. Now we obtain solution $u$ for given nonlinearity 
$G$ by using Strichartz estimates and by passing to the limit.

The twisted Laplacian operator 
$\LL$  was introduced by R. S. Strichartz \cite{S1}, and called the  
special Hermite operator.
The twisted Laplacian ${\mathcal L}$ on $\C^n$ is given by
\Bea {\mathcal L}&=&\frac{1}{2}\,
\sum_{j=1}^n\left(Z_j\overline{Z}_j+\overline{Z}_jZ_j\right)
\Eea 
where 
$Z_j= \frac{\partial}{\partial z_j} + \frac{1}{2}\bar{z}_j,~
\overline{Z}_j= -\frac{\partial}{\partial \bar{z}_j} + \frac{1}{2}
z_j,~ j=1,2,\ldots n.$ Here $\frac{\partial}{\partial z_j}$ and
$\frac{\partial}{\partial \bar{z}_j}$  denote the complex derivatives 
$\frac{\partial}{\partial_{x_j}} \mp i \frac{\partial}{\partial_{y_j}}$ 
respectively.  Nonlinear Schr\"{o}dinger equation for the twisted Laplacian
has been studied in \cite{pkrvks,pkrvks1,ZZ}. 
For spectral theory of twisted Laplacian $\LL$ we refer to \cite{T,T1}
and for Schr\"{o}dinger equation
we refer to \cite{C,TT}.
Now we state the main theorem of this paper.

\begin{Theorem}\label{criticallocal} Let $f\in \tilde{W}_{\LL}^{1,2}(\C^n)$ and 
$G$ be as in (\ref{nlc1}) and (\ref{nlc2}) with $\alpha=\frac{2}{n-1}$ 
and $n\geq 2$. 
Initial value problem (\ref{cmainpde}), (\ref{cinidata}) has maximal 
solution $u\in C((T_*,T^*),\tilde{W}_{\LL}^{1,2})\cap L_{{\small\mbox{loc}}}^{q_1}\left((T_*,T^*), \tilde{W}_{\LL}^{1,p_1}(\C^n)\right)$ for every admissible pair $(q_1,p_1)$, where $t_0\in (T_*,T^*)$. Moreover the following properties hold:
\begin{description}
\item[(i)(Uniqueness)] Solution is unique in $C((T_*,T^*),\tilde{W}_{\LL}^{1,2}(\C^n))\cap L^{\gamma}((T_*,T^*),$ $\tilde{W}_{\LL}^{1,\rho})$.
\item[(ii)(Blowup alternative)] If $T^*<\infty$ then $\Vert u\Vert_{L^{q}\left((t_0,T^*),\tilde{W}_{\LL}^{1,p}\right)}=\infty$ for every admissible pair $(q,p)$ with 
$2<p$ and
$\frac{1}{q}=n\left(\frac{1}{2}-{1\over p}\right)$.
Similar conclusion holds if $T_*>-\infty$.
\item[(iii)(Stability)] If
$f_j\to f$ in $\tilde{W}_{\LL}^{1,2}(\C^n)$ then $\Vert u-\tilde{u_j}\Vert_{L^{q}\left(I,\tilde{W}_{\LL}^{1,p}(\C^n)\right)}\to 0$ as $j\to \infty$ for every admissible pair $(q,p)$ and every interval $I$ with $\overline{I}\subset (T_*,T^*)$, where  $u,\tilde{u_j}$ are solutions corresponding to $f,f_j$ respectively.

\item[(iv)(Conservation of charge and energy)] If $\psi: \C^n\times [0,\infty)\to \R$ is real valued, then we have conservation of charge, i.e., 
$\Vert u(\cdot,t)\Vert_{L^2(\C^n)}=\Vert f\Vert_{L^2(\C^n)}$  and conservation of 
energy $E(u(\cdot,t))=E(u(\cdot,t_0))=E(f)$ for each  $t\in (T_*,T^*)$, where 
\bea\label{energy1}\hspace{1.4cm} E(f)&=&\frac{1}{4}\sum_{j=1}^{n}
\left(\Vert Z_jf\Vert_{L^2(\C^n)}^2+\Vert \bar{Z}_jf\Vert_{L^2(\C^n)}^2 \right) 
+\int_{\C^n}\tilde{G}(z,|f|)dz
\eea
and $\tilde{G}:\C^n\times [0,\infty)\to \R$ is given by 
\bea\label{energy2}\tilde{G}(z,\sigma)=\int_{s=0}^{\sigma}s\psi(z,s)ds=\int_{s=0}^{\sigma}G(z,s)ds.
\eea

\end{description}

\end{Theorem}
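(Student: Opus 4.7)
The plan is to follow the Cazenave--Weissler truncation scheme of \cite{CW1}, carried out in the Strichartz space indexed by the admissible pair $(\gamma,\rho)$ at which Lemma \ref{embedding1} supplies the embedding needed to handle the critical nonlinearity. Fix a smooth cutoff $\theta_m:[0,\infty)\to[0,1]$ with $\theta_m\equiv 1$ on $[0,m]$ and $\theta_m\equiv 0$ on $[2m,\infty)$, and set $G_m(z,w)=\theta_m(|w|)\,G(z,w)$. Then $G_m$ still satisfies (\ref{nlc1})--(\ref{nlc2}), but it also inherits a global $w$-Lipschitz bound (with constant depending on $m$) and an effective subcritical growth. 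Applying the subcritical theory of \cite{pkrvks1} with $G$ replaced by $G_m$ yields, for each $m$, a maximal solution $u_m\in C((T_*^m,T^{*,m}),\tilde{W}_{\LL}^{1,2})\cap L^{q_1}_{\mathrm{loc}}(\tilde{W}_{\LL}^{1,p_1})$ for every admissible pair $(q_1,p_1)$.

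The key step is a Strichartz bound for $u_m$ on an interval $I\ni t_0$ independent of $m$. From Duhamel's formula
\be
u_m(t)=e^{-i(t-t_0)\LL}f-i\int_{t_0}^t e^{-i(t-s)\LL}G_m(\cdot,u_m(s))\,ds,
\ee
the Strichartz estimates at the pair $(\gamma,\rho)$, H\"older in time and space, and Lemma \ref{embedding1} (to control $\|u_m\|_{L^r}$ by $\|u_m\|_{\tilde{W}_{\LL}^{1,\rho}}$ for the $r$ dictated by (\ref{nlc2}) at the critical exponent $\alpha=\frac{2}{n-1}$) together yield on any $J\ni t_0$ an inequality of the schematic form
\be
\|u_m\|_{L^\gamma(J,\tilde{W}_{\LL}^{1,\rho})}\le C\,\|e^{-i(\cdot-t_0)\LL}f\|_{L^\gamma(J,\tilde{W}_{\LL}^{1,\rho})}+C\,\|u_m\|_{L^\gamma(J,\tilde{W}_{\LL}^{1,\rho})}^{1+\alpha}.
\ee
Since the free-evolution Strichartz norm of $f$ tends to $0$ as $|J|\to 0$ by dominated convergence, a continuity argument fixes $I=[t_0-\delta,t_0+\delta]$ and $K$, both independent of $m$, with $\|u_m\|_{L^\gamma(I,\tilde{W}_{\LL}^{1,\rho})}\le K$; one further application of the Strichartz estimates promotes this to a uniform bound on every admissible norm on $I$.

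Once $(u_m)$ is uniformly bounded on $I$, the cutoff $\theta_m$ becomes essentially inactive for $m$ large (quantified via Lemma \ref{embedding1}), so that $u_m-u_{m'}$ satisfies a Strichartz contraction estimate, showing $(u_m)$ is Cauchy in $C(I,L^2)\cap L^\gamma(I,L^\rho)$. The limit $u$, upgraded to the $\tilde{W}_{\LL}^{1,2}$ level by weak-$\ast$ compactness, solves the integral equation with $G$ in place of $G_m$, and belongs to every admissible Strichartz space by one more application of Strichartz. Iterating off the endpoints produces the maximal interval $(T_*,T^*)$.

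The four conclusions follow by standard arguments. \emph{Uniqueness} (i) is a contraction estimate for the difference of two solutions in $C(I,\tilde{W}_{\LL}^{1,2})\cap L^\gamma(I,\tilde{W}_{\LL}^{1,\rho})$ on small subintervals, combined with a connectedness argument. The \emph{blowup alternative} (ii) follows since finiteness of any admissible Strichartz norm on $(t_0,T^*)$ would allow the local construction to be restarted past $T^*$. \emph{Stability} (iii) is obtained by comparing $u$ and $\tilde{u}_j$ to a common truncated solution $u_m$ via an $\varepsilon/3$ argument using the bounds of step two. \emph{Conservation of charge and energy} (iv) is established first for the truncated regular approximations (by pairing (\ref{cmainpde}) with $\bar u_m$ and $\partial_t\bar u_m$, using that $\tilde G$ is the antiderivative in (\ref{energy2})), and transferred to $u$ via the stability just proved. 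The main obstacle is the uniform Strichartz bound on $u_m$: in the critical regime no single contraction closes in terms of the $\tilde{W}_{\LL}^{1,2}$ norm of $f$ alone, and one must rely on the decay of the free-evolution Strichartz norm together with Lemma \ref{embedding1} to bootstrap.
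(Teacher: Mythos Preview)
Your overall strategy matches the paper's: truncate the nonlinearity, invoke the subcritical theory of \cite{pkrvks1} for the approximants $u_m$, obtain an $m$-uniform bound in $L^\gamma(I,\tilde{W}_{\LL}^{1,\rho})$ by the continuity argument you describe, show $(u_m)$ is Cauchy in $L^q(I,L^p)$, and pass to the limit. Your choice of cutoff $\theta_m(|w|)G(z,w)$ differs from the paper's explicit $\psi_m$, but either works.

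There are, however, two genuine gaps in the later steps.

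\textbf{Stability (iii).} Your $\varepsilon/3$ argument through a ``common truncated solution $u_m$'' can at best deliver convergence $\tilde u_j\to u$ in $L^q(I,L^p)$, because that is the only topology in which you have shown $u_m\to u$. The statement requires convergence in $L^q(I,\tilde{W}_{\LL}^{1,p})$. The paper does not route through the truncation here; it compares $u$ and $\tilde u_k$ directly. The point is that $S(G(z,\tilde u_k)-G(z,u))$, for $S\in\{Z_j,\bar Z_j\}$, splits into terms of two types: terms carrying $S(\tilde u_k-u)$, which are absorbed by the smallness $\|\tilde u_k\|_{L^\gamma(I,\tilde W^{1,\rho}_\LL)}\le 2\delta$, and terms of the shape $[\Phi(\tilde u_k)-\Phi(u)]\,Su$ with $|\Phi(w)|\le C|w|^{2/(n-1)}$, which go to zero in $L^{\gamma'}(I,L^{\rho'})$ by a dominated-convergence lemma (Proposition~\ref{forstability1}). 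This structural decomposition is what upgrades the convergence to the $\tilde W^{1,p}_\LL$ level; it is not captured by your sketch.

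\textbf{Energy conservation (iv).} You propose to transfer $E_m(u_m(t))=E_m(f)$ to the limit ``via the stability just proved''. But the convergence $u_m\to u$ you have established is only in $L^q(I,L^p)$; you do not have $u_m(\cdot,t)\to u(\cdot,t)$ strongly in $\tilde W^{1,2}_\LL$, so the kinetic part $\sum_j\|Z_ju_m(t)\|_{L^2}^2$ does not obviously converge. The paper's workaround is the essential idea you are missing: by weak lower semicontinuity (the duality argument (\ref{duality})) and Fatou one gets only the inequality
\[
E(u(\cdot,t))\ \le\ \liminf_{m\to\infty}E_m(u_m(\cdot,t))\ =\ \lim_{m\to\infty}E_m(f)\ =\ E(f),
\]
valid for $t$ in a neighborhood of $t_0$. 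Since the initial time $t_0$ is arbitrary, this says that $t\mapsto E(u(\cdot,t))$ has a local maximum at every point of $(T_*,T^*)$; together with continuity of $E(u(\cdot,t))$ this forces $E(u(\cdot,t))\equiv E(f)$. Without this one-sided Fatou plus ``local maximum everywhere'' trick, your argument for (iv) does not close.
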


To prove local existence, we truncate the given nonlinearity $G$ and obtain 
solutions for the truncated problem.
For $m\geq 1$, consider $G_m(z,u)=\psi_m(z,|u|)u: \C^n\times \C\to \C$, where
\bea \psi_m(z,\sigma)=\left\{\begin{array}{cc} \psi(z,\sigma) &\mbox{~if~} 
0\leq\sigma\leq m\\
m^2\left(\frac{\psi(z,\sigma)}{\sigma^2}-\frac{\psi(z,m)}{\sigma^2}+
\frac{\psi(z,m)}{m^2}\right) 
& \mbox{~if~}  \sigma\geq m \\
\end{array} \right. \eea
For $m=0$, we define $G_0(z,u)=G(z,u)$ and $\psi_0(z,|u|)=\psi(z,|u|)$.
Note that $\psi_m$ is differentiable at $\sigma=m$ with respect to $\sigma$ 
and also note that $G_m$ will satisfy (\ref{nlc1}) and (\ref{nlc2}) 
with $\alpha=\frac{2}{n-1}$ as well as $\alpha=0$. 
For $m\geq 1$, $G_m(z,\cdot):\C\to\C$ is globally Lipschitz from mean value theorem and 
\bea\label{lip} \left|G_m(z,u)-G_m(z,v)\right|\leq C_m|u-v| \mbox{~~for~~} m\geq 1
\eea
where constant $C_m$ depends on $m\in\Z_{\geq 1}$ but independent of $z\in\C^n$ and $u,v\in C$.
Moreover by mean value theorem we also see that 
\bea\label{lip1} \left|G_m(z,u)-G_m(z,v)\right|\leq C(|u|+|v|)^{\frac{2}{n-1}}|u-v| \mbox{~~for~~} m\geq 0
\eea
where constant $C$ is independent of $m\in \Z_{\geq 0},z\in \C^n$ and $u,v\in C$. 

Since $F_0$ satisfies estimate (\ref{nlc2})  with $\alpha=\frac{2}{n-1}$, therefore we conclude that
\bea\label{nlc3}
|F_m(z,\sigma)|\leq C\sigma^{\frac{2}{n-1}},
\eea
 where $F_m= \psi_m, \partial_{x_j} \psi_m $, 
$ \partial_{y_j}\psi_m$, $\sigma\partial_{\sigma}\psi_m(x,y,\sigma)$ 
with $1\leq j\leq n$ and constant $C$ is independent of 
$m$.

In view of Duhamel's formula (see, Lemma A.1 in \cite{pkrvks}) and in order to 
find solution for given IVP (\ref{cmainpde}), (\ref{cinidata}) 
with initial value $f\in \tilde{W}_{\LL}^{1,2}(\C^n)$,   
it is sufficient to find the solution of 
the following equation
\Bea \label{opr5} u(z,t)=e^{-i(t-t_0)\mathcal{L}}f (z)
-i\displaystyle\int_{t_0}^t e^{-i(t-s)\mathcal{L}}G(z,u(z,s))ds.
\Eea
In view of Banach fixed point theorem, 
for given $T>0$, $u:\C^n\times (t_0-T,t_0+T)\to \C$ and $m\geq 0$,
 we  consider the operator ${\mathcal H_m}$ given by the following 
\bea \label{opr} {\mathcal H_m}(u)(z,t)=e^{-i(t-t_0)\mathcal{L}}f (z)
-i\displaystyle\int_{t_0}^t e^{-i(t-s)\mathcal{L}}G_m(z,u(z,s))ds.
\eea

\section{Some auxilliary estimates}

\begin{Lemma}\label{embedding1} [Sobolev Embedding Theorem] 
We have the continuous inclusion 
\begin{equation*}
\begin{array}{lll}
\tilde{W}_{\LL}^{1,p_1}(\C^n)\hookrightarrow L^{p_2}(\C^n) & \text{ for } p_1\leq p_2 \leq
                                                     \frac{2np_1}{2n-p_1} &\text{ if
                                                     } p_1<2n   \\
                                                     &\text{ for } p_1 \leq
                                                     p_2 < \infty &\text{ if
                                                     }  p_1=2n  \\
                                                     &\text{ for } p_1 \leq
                                                     p_2 \leq \infty &\text{ if
                                                     }  p_1>2n.

\end{array}
\end{equation*}

\end{Lemma}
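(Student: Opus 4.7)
The plan is to reduce the Sobolev embedding for the twisted Laplacian Sobolev space $\tilde{W}^{1,p_1}_\LL(\C^n)$ to the classical Sobolev embedding on $\R^{2n}$; the critical exponent $2np_1/(2n-p_1)$ in the statement is exactly that of the classical theory in real dimension $2n$, which strongly suggests no new ingredient beyond the Euclidean embedding is required.

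The first step is to recover the Euclidean first-order derivatives from the twisted ones $Z_j, \bar Z_j$. From the definitions $Z_j = \partial_{z_j} + \bar{z}_j/2$ and $\bar{Z}_j = -\partial_{\bar{z}_j} + z_j/2$, a short direct calculation using $\partial_{z_j}\pm\partial_{\bar z_j}$ gives
\begin{equation*}
Z_j - \bar{Z}_j = \partial_{x_j} - iy_j, \qquad Z_j + \bar{Z}_j = x_j - i\partial_{y_j},
\end{equation*}
so that $\partial_{x_j} u = (Z_j-\bar Z_j)u + iy_j u$ and $\partial_{y_j} u = i(Z_j+\bar Z_j)u - ix_j u$. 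Combined with the fact that the $\tilde{W}^{1,p_1}_\LL$-norm from \cite{pkrvks1} controls $u$, $Z_j u$, $\bar Z_j u$ in $L^{p_1}$, together with the weighted terms $x_j u,\, y_j u$ in $L^{p_1}$ (either by direct inclusion in the definition or via the standard equivalence $\|u\|_{L^{p_1}} + \|\LL^{1/2} u\|_{L^{p_1}} \sim \|u\|_{\tilde W^{1,p_1}_\LL}$), these identities give the continuous inclusion $\tilde{W}^{1,p_1}_\LL(\C^n) \hookrightarrow W^{1,p_1}(\R^{2n})$.

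In the second step I would invoke the classical Sobolev embedding on $\R^{2n}$, namely $W^{1,p_1}(\R^{2n}) \hookrightarrow L^{p_2}(\R^{2n})$, for the three ranges of $p_2$ listed in the statement; these correspond exactly to the subcritical case $p_1 < 2n$ (via the Gagliardo--Nirenberg--Sobolev inequality), the endpoint $p_1 = 2n$ (John--Nirenberg / BMO type arguments), and the Morrey range $p_1 > 2n$ (where one gets continuous inclusion into $L^\infty$). Composing with Step 1 finishes the argument.

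The main obstacle will be confirming that the weight terms $x_j u$ and $y_j u$ are indeed controlled by the $\tilde{W}^{1,p_1}_\LL$ norm. In the Hilbert case $p_1=2$ this is immediate from the canonical commutation $[Z_j, \bar Z_j] = 1$ together with integration by parts, giving a Heisenberg-type uncertainty identity. For general $p_1$ one needs either that such weighted multiplication norms are explicitly built into the definition of $\tilde W^{1,p_1}_\LL$ in \cite{pkrvks1}, or an appeal to the $L^p$-boundedness of the Riesz transforms $Z_j \LL^{-1/2}$ and $\bar Z_j \LL^{-1/2}$ associated with the twisted Laplacian, which is a standard fact from the spectral theory of $\LL$. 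Once such an equivalence of norms is in place, the rest of the argument reduces to classical Sobolev theory and no further difficulty arises.
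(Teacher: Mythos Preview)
Your reduction to the classical Sobolev embedding is the right instinct, but the inclusion $\tilde{W}^{1,p_1}_{\LL}(\C^n)\hookrightarrow W^{1,p_1}(\R^{2n})$ that you need is false in general, and this is exactly the obstacle you flag but do not resolve. The norm on $\tilde{W}^{1,p_1}_{\LL}$ is built from $\|u\|_{L^{p_1}}$ together with $\|Z_j u\|_{L^{p_1}}$, $\|\bar Z_j u\|_{L^{p_1}}$ only; the weighted norms $\|x_j u\|_{L^{p_1}}$, $\|y_j u\|_{L^{p_1}}$ are \emph{not} controlled. Think of $\LL$ as a magnetic Schr\"odinger operator: the covariant derivative $(\partial_{x_j}-\tfrac{i y_j}{2})u=\tfrac{1}{2}(Z_j-\bar Z_j)u$ can be small while $\partial_{x_j}u$ and $y_j u$ are each individually large, cancelling in the combination. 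The Riesz transform bounds you invoke give only the equivalence of $\sum_j(\|Z_j u\|_{p_1}+\|\bar Z_j u\|_{p_1})$ with $\|\LL^{1/2}u\|_{p_1}$; they say nothing about $\|x_j u\|_{p_1}$, so neither of your proposed fixes closes the gap.

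The paper sidesteps the weights entirely via the diamagnetic inequality. After regularising by $u_\epsilon=e^{-\epsilon\LL}f$, it works with the \emph{modulus} $|u_\epsilon|$ and uses
\[
2|u_\epsilon|\,\partial_{x_j}|u_\epsilon|=\partial_{x_j}(\bar u_\epsilon u_\epsilon)=2\Re\bigl(\bar u_\epsilon\,\partial_{x_j}u_\epsilon\bigr)=2\Re\bigl(\bar u_\epsilon\,(\partial_{x_j}-\tfrac{i y_j}{2})u_\epsilon\bigr),
\]
the last equality holding because $\tfrac{i y_j}{2}|u_\epsilon|^2$ is purely imaginary. Dividing by $|u_\epsilon|$ on $\{u_\epsilon\neq 0\}$ yields the pointwise bound $\bigl|\partial_{x_j}|u_\epsilon|\bigr|\leq \tfrac{1}{2}(|Z_j u_\epsilon|+|\bar Z_j u_\epsilon|)$, and similarly in $y_j$. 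Hence $|u_\epsilon|\in W^{1,p_1}(\R^{2n})$ with norm bounded by $\|u_\epsilon\|_{\tilde W^{1,p_1}_{\LL}}$, and the classical embedding applied to $|u_\epsilon|$ gives $\|u_\epsilon\|_{L^{p_2}}=\||u_\epsilon|\|_{L^{p_2}}\leq C\|u_\epsilon\|_{\tilde W^{1,p_1}_{\LL}}$; one then lets $\epsilon\to 0$. The magnetic potential vanishes precisely because one differentiates $|u|$ rather than $u$; your direct route through $\partial_{x_j}u$ cannot avoid the weights.
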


\begin{proof}
Let $f\in \tilde{W}_{\LL}^{1,p_1}(\C^n)$ and $\epsilon>0$. Consider $u_{\epsilon}=e^{-\epsilon\LL}f$. 
Then $u_{\epsilon}\in \tilde{W}_{\LL}^{1,p_1}(\C^n)\cap C^{\infty}(\C^n)$ and we have
\Bea
2|u_{\epsilon}| \frac{\partial}{\partial x_j}|u_{\epsilon}|=\frac{\partial}{\partial x_j}
( \overline{u_{\epsilon}} u_{\epsilon})=
2\Re \left( \overline{u_{\epsilon}} \,\frac{\partial}{\partial x_j}u_{\epsilon}\right)= 
2\Re\left(  \overline{u_{\epsilon}}( \frac{\partial}{\partial x_j}
-\frac{iy_j}{2} )\,u_{\epsilon}\right).
\Eea
Hence on the set $A=\set{z \in\C^n  \suchthat  u_{\epsilon}(z)\neq 0 },$ we have
$$ \left| \frac{\partial}{\partial x_j}|u_{\epsilon}| \right| =  
\left| \Re \left(  \frac{\overline{u_{\epsilon}}}{|u_{\epsilon}|} ( \frac{\partial}
{\partial x_j}-\frac{iy_j}{2} )\,u_{\epsilon} \right) \right|\leq \frac{1}{2}(|Z_ju_{\epsilon}|+|\bar{Z}_ju_{\epsilon}|) .$$ 
Similarly  $ \left|\frac{\partial}{\partial y_j}|u_{\epsilon}|
\right|\leq \frac{1}{2}(|Z_ju_{\epsilon}|+|\bar{Z}_ju_{\epsilon}|)$ on $A$.
Note that $\Vert u_{\epsilon}\Vert_{L^{p_2}(\C^n)} =\Vert u_{\epsilon}\chi_{A}\Vert_{L^{p_2}(\C^n)}$. 
By usual Sobolev embedding on $\C^n$ and above observations, we have 
inequality $\Vert u_{\epsilon}\Vert_{L^{p_2}(\C^n)}\leq 
C\Vert|u_{\epsilon}\chi_{A}|\Vert_{W^{1,p_1}}\leq C\,
\Vert u_{\epsilon} \Vert_{\tilde{W}_{\LL}^{1,p_1}}$. 
Since  $u_{\epsilon}=e^{-\epsilon\LL}f\to f$ in $\tilde{W}_{\LL}^{1,p_1}(\C^n)$ and 
also in $L^{p_2}(\C^n)$ as $\epsilon\to 0$ (see \cite{pkrvks1}), therefore
 we have $\Vert f\Vert_{L^{p_2}(\C^n)}\leq
C\Vert f \Vert_{\tilde{W}_{\LL}^{1,p_1}(\C^n)}$, where constant $C$ is a generic constant independent of $f$. Hence Lemma is proved.

\end{proof}

\begin{Lemma}\label{estimates2} Let $u,v\in L^{\gamma}\left(I,\tilde{W}_{\LL}^{1,\rho}(\C^n)\right)$ 
for some interval $I$, then following estimate holds for each $m\in\Z_{\geq 0}$
\bea\label{criticalest1}
\begin{split}\Vert G_m(z,u)-G_m(z,v)\Vert_{L^{\gamma'}\left(I,L^{\rho'}(\C^n)\right)} \leq  
C\Vert u-v\Vert_{L^{\gamma}\left(I,L^{\rho}(\C^n)\right)}\times\\
\left(\Vert u\Vert_{L^{\gamma}\left(I,\tilde{W}_{\LL}^{1,\rho}(\C^n)\right)}+
\Vert v\Vert_{L^{\gamma}\left(I,\tilde{W}_{\LL}^{1,\rho}(\C^n)\right)} \right)^{\frac{2}{n-1}}
\end{split}\eea
where constant $C$ is independent of $u,v,m, t_0$ and $I$.
\end{Lemma}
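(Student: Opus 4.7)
The plan is to combine the pointwise inequality \eqref{lip1}, namely $|G_m(z,u)-G_m(z,v)|\leq C(|u|+|v|)^{2/(n-1)}|u-v|$, with two applications of Hölder (one in space, one in time) and then bring in Lemma \ref{embedding1} to convert an $L^p$ norm of $|u|+|v|$ into the $\tilde W_{\LL}^{1,\rho}$ norm. The choice of exponents $\rho=\tfrac{2n^2}{n^2-n+1}$ and $\gamma=\tfrac{2n}{n-1}$ is engineered exactly so that both Hölder applications close up; the main obstacle is simply the bookkeeping of exponents and the verification that the critical Sobolev embedding is available.

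\smallskip
\noindent\textbf{Spatial step.} First I would fix $t\in I$ and estimate in $L^{\rho'}(\C^n)$. Writing $M(t)=\|u(t)\|_{\tilde W_{\LL}^{1,\rho}}+\|v(t)\|_{\tilde W_{\LL}^{1,\rho}}$ and applying Hölder with the splitting $1/\rho'=1/\rho+1/q$, one finds $q=n^2/(n-1)$, so that $\|(|u|+|v|)^{2/(n-1)}\|_{L^q}=\||u|+|v|\|_{L^{2n^2/(n-1)^2}}^{2/(n-1)}$. Next I would verify the arithmetic identity $\tfrac{2n\rho}{2n-\rho}=\tfrac{2n^2}{(n-1)^2}$; since $\rho<2n$ for $n\geq 2$, Lemma \ref{embedding1} yields the embedding $\tilde W_{\LL}^{1,\rho}(\C^n)\hookrightarrow L^{2n^2/(n-1)^2}(\C^n)$. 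Combining these steps produces the pointwise-in-$t$ estimate
\[
\|G_m(\cdot,u(t))-G_m(\cdot,v(t))\|_{L^{\rho'}}\leq C\,\|u(t)-v(t)\|_{L^{\rho}}\,M(t)^{2/(n-1)}.
\]

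\smallskip
\noindent\textbf{Temporal step.} Then I would raise the above inequality to the $\gamma'$ power, integrate over $I$, and apply Hölder in time with exponents $p=\gamma/\gamma'=(n+1)/(n-1)$ and conjugate $p'=(n+1)/2$. A direct check shows $\gamma'p=\gamma$ and $\tfrac{2\gamma'}{n-1}\,p'=\gamma$, so the integral of $\|u-v\|_{L^{\rho}}^{\gamma'p}$ reconstructs $\|u-v\|_{L^{\gamma}(I,L^{\rho})}^{\gamma'}$, and the integral of $M(t)^{2\gamma'p'/(n-1)}$ reconstructs $\|M\|_{L^{\gamma}(I)}^{2\gamma'/(n-1)}$. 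Taking the $1/\gamma'$ root and using $\|M\|_{L^{\gamma}(I)}\leq \|u\|_{L^{\gamma}(I,\tilde W_{\LL}^{1,\rho})}+\|v\|_{L^{\gamma}(I,\tilde W_{\LL}^{1,\rho})}$ gives exactly \eqref{criticalest1}.

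\smallskip
\noindent The constant $C$ produced by this argument depends only on the Sobolev embedding constant and on the constant in \eqref{lip1}; both are independent of $m$, $t_0$, and $I$, so the uniformity claim in the statement is automatic. The only real subtlety is ensuring the arithmetic identities $\tfrac{2n\rho}{2n-\rho}=\tfrac{2n^2}{(n-1)^2}$ and $1/\gamma+1/\gamma'=1$ hold as claimed, which I would verify once at the start before carrying out the two Hölder estimates above.
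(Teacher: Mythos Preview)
Your proof is correct and follows essentially the same approach as the paper: the pointwise Lipschitz bound \eqref{lip1}, H\"older in $z$ with $\tfrac{1}{\rho'}=\tfrac{1}{\rho}+\tfrac{n-1}{n^2}$, the Sobolev embedding $\tilde W_{\LL}^{1,\rho}\hookrightarrow L^{2n^2/(n-1)^2}$ (the paper writes the target exponent as $\tfrac{n\gamma}{n-1}$, which is the same number), and then H\"older in $t$ via $\tfrac{1}{\gamma'}=\tfrac{1}{\gamma}+\tfrac{1}{n}$. Your presentation of the temporal H\"older as raising to the $\gamma'$ power and splitting with $(p,p')=\bigl(\tfrac{n+1}{n-1},\tfrac{n+1}{2}\bigr)$ is equivalent to the paper's additive decomposition; the arithmetic identities you check are exactly what is needed.
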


\begin{proof} Since $\frac{1}{\rho'}=\frac{1}{\rho}+
\frac{n-1}{n^2}=\frac{1}{\rho}+\frac{2}{n-1}\cdot\frac{n-1}{n\gamma}$, by using H\"{o}lder's 
inequality in the $z$-variable in 
(\ref{lip1}) and by embedding theorem (Lemma \ref{embedding1}), we get for each $t\in I$
\bea&&\Vert G_m(\cdot,u(\cdot,t))-G_m(\cdot,v(\cdot,t))\Vert_{L^{\rho'}(\C^n)} \nonumber\\
&\leq &C\Vert (u-v)(\cdot,t)\Vert_{L^{\rho}(\C^n)}\left(\Vert u(\cdot,t)
\Vert_{L^{\frac{n\gamma}{n-1}}(\C^n)}+\Vert v(\cdot,t)\Vert_{L^{\frac{n\gamma}{n-1}}(\C^n)}
\right)^{\frac{\gamma}{n}}\nonumber\\
&\leq &C\Vert (u-v)(\cdot,t)\Vert_{L^{\rho}(\C^n)}\left(\Vert 
u(\cdot,t)\Vert_{\tilde{W}_{\LL}^{1,\rho}(\C^n)}
+\Vert v(\cdot,t)\Vert_{\tilde{W}_{\LL}^{1,\rho}(\C^n)}\right)^{\frac{\gamma}{n}}.\eea
Since $\frac{1}{\gamma'}=\frac{1}{\gamma}+\frac{1}{n}$, by taking $L^{\gamma'}$ 
norm in $t$-variable in this inequality and then by using 
H\"{o}der's inequality we get desired estimate (\ref{criticalest1}). 
\end{proof}



\begin{Lemma}\label{estimates3} Let $I$ be a bounded interval and $u\in 
L^{\infty}(I,\tilde{W}_{\LL}^{1,2}(\C^n))\cap L^{\gamma}$ $(I,\tilde{W}_{\LL}^{1,\rho}(\C^n))$, 
then following estimate holds 
\Bea&\Vert G_m(z,u(z,t))-G(z,u(z,t))\Vert_{L^{\gamma'}\left(I,L^{\rho'}(\C^n)\right)}\\
&\leq C|I|^{\frac{n-1}{2n}}m^{-\frac{1}{n(n-1)}}\Vert u\Vert_{L^{\infty}
\left(I,\tilde{W}_{\LL}^{1,2}(\C^n)\right)}^{\frac{n^2-n+1}{n(n-1)}}\Vert 
u\Vert_{L^{\gamma}\left(I,\tilde{W}_{\LL}^{1,\rho}(\C^n)\right)}^{\frac{2}{n-1}}
\Eea
for all $m\geq 1$, where constant $C$ is independent of $m,u$ and $I$. 
\end{Lemma}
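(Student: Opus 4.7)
The plan is to exploit the fact that $G_m(z,u)-G(z,u)$ is supported on the set $\{|u|>m\}$, and then to combine the pointwise growth bound (\ref{nlc3}) with an elementary trick that converts the indicator $\chi_{\{|u|>m\}}$ into a small power of $m^{-1}$. Since $\psi_m(z,\sigma)=\psi(z,\sigma)$ for $\sigma\le m$, the difference vanishes off $\{|u|>m\}$; on that set $|G_m(z,u)|+|G(z,u)|\le C|u|^{(n+1)/(n-1)}$ by (\ref{nlc3}) applied to $\psi$ and to $\psi_m$. Using the elementary bound $\chi_{\{|u|>m\}}\le (|u|/m)^{1/(n(n-1))}$, I obtain the pointwise estimate
\[
|G_m(z,u)-G(z,u)|\le C\,m^{-\frac{1}{n(n-1)}}\,|u|^{\frac{n^2+n+1}{n(n-1)}}.
\]

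I would then decompose the exponent as $\frac{n^2+n+1}{n(n-1)}=a+b$ with $a=\frac{n^2-n+1}{n(n-1)}$ and $b=\frac{2}{n-1}$, matching the two powers of $u$ that appear on the right side of the lemma. Applying H\"older in $z$ with the same identity $\frac{1}{\rho'}=\frac{1}{\rho}+\frac{n-1}{n^2}$ used in the proof of Lemma~\ref{estimates2} gives
\[
\bigl\||u|^{a+b}\bigr\|_{L^{\rho'}_z}\le \|u\|_{L^{a\rho}_z}^{a}\,\|u\|_{L^{bn^2/(n-1)}_z}^{b}.
\]
The key observation is that the spatial exponents $a\rho=\frac{2n}{n-1}$ and $\frac{bn^2}{n-1}=\frac{2n^2}{(n-1)^2}$ are exactly the critical endpoints of the Sobolev inclusions $\tilde{W}_{\LL}^{1,2}(\C^n)\hookrightarrow L^{2n/(n-1)}$ and $\tilde{W}_{\LL}^{1,\rho}(\C^n)\hookrightarrow L^{2n^2/(n-1)^2}$ supplied by Lemma~\ref{embedding1}. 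After combining I arrive at
\[
\|G_m(\cdot,u)-G(\cdot,u)\|_{L^{\rho'}_z}\le C\,m^{-\frac{1}{n(n-1)}}\,\|u\|_{\tilde{W}_{\LL}^{1,2}}^{a}\,\|u\|_{\tilde{W}_{\LL}^{1,\rho}}^{b}.
\]

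To finish, I take the $L^{\gamma'}$ norm in $t\in I$, pull the first factor out as the constant $\|u\|_{L^{\infty}(I,\tilde{W}_{\LL}^{1,2})}^{a}$, and apply H\"older in time to the remaining factor with exponents $\frac{2n}{n-1}$ and $n$, which are legitimate since $\frac{n-1}{2n}+\frac{1}{n}=\frac{n+1}{2n}=\frac{1}{\gamma'}$. The indicator of $I$ contributes $|I|^{(n-1)/(2n)}$, and because $bn=\gamma$ the other factor becomes $\|u\|_{L^\gamma(I,\tilde{W}_{\LL}^{1,\rho})}^{b}$, producing exactly the inequality claimed. The one delicate point in the whole argument is the choice of the auxiliary exponent $1/(n(n-1))$ in the bound on $\chi_{\{|u|>m\}}$: it has to be small enough that both Sobolev embeddings above remain valid at precisely those endpoints and that $b\gamma'<\gamma$ (so the temporal H\"older produces a positive power of $|I|$), yet it is exactly what sets the decay rate in $m$. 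Every remaining step is a straightforward calibration of H\"older exponents.
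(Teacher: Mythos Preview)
Your argument is correct and follows essentially the same route as the paper: the support observation, the pointwise bound $|G_m-G|\le C|u|^{(n+1)/(n-1)}\chi_{\{|u|>m\}}$, the Chebyshev-type replacement of the indicator by $(|u|/m)^{1/(n(n-1))}$, the spatial H\"older splitting $\frac{1}{\rho'}=\frac{1}{\rho}+\frac{n-1}{n^2}$ with the embeddings $\tilde{W}_{\LL}^{1,2}\hookrightarrow L^{2n/(n-1)}$ and $\tilde{W}_{\LL}^{1,\rho}\hookrightarrow L^{2n^2/(n-1)^2}$, and the temporal H\"older splitting $\frac{1}{\gamma'}=\frac{1}{\gamma}+\frac{1}{n}$ are all exactly what the paper uses. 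The only cosmetic difference is that you apply the indicator trick pointwise \emph{before} the spatial H\"older, whereas the paper first separates $u\chi_{\{|u|>m\}}$ from $|u|^{2/(n-1)}$ via H\"older and then bounds $\|u\chi_{\{|u|>m\}}\|_{L^\rho}$ using the same power of $m$; the exponents match line for line.
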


\begin{proof} Note that  
$$G_m(z,u(z,t))-G(z,u(z,t))=(u\chi_{|u(z,t)|>m}(z,t))(\psi_m(z,|u|)-\psi(z,|u|)).$$
Therefore 
$|G_m(z,u(z,t))-G(z,u(z,t))|\leq C|u\chi_{|u(z,t)|>m}(z,t)|~|u|^{\frac{2}{n-1}}$. 
By Taking 
$L^{\rho'}$-norm in the $z$-variable, we have
\bea\label{norminz}\Vert G_m(z,u)-G(z,u)\Vert_{L^{\rho'}(\C^n)}&\leq &C\Vert u\chi_{|u|>m}(\cdot,t)\Vert_{L^{\rho}(\C^n)}\Vert u(\cdot,t)\Vert_{L^{\frac{n\gamma}{n-1}}(\C^n)}^{\frac{\gamma}{n}} \nonumber\\
&\leq & C\Vert u\chi_{|u|>m}(\cdot,t)\Vert_{L^{\rho}(\C^n)}\Vert u(\cdot,t)\Vert_{\tilde{W}_{\LL}^{1,\rho}(\C^n)}^{\frac{\gamma}{n}}.  
\eea
Now we observe the following
\Bea\Vert u\chi_{|u|>m}(\cdot,t)\Vert_{L^{\rho}(\C^n)}^{\rho}&=&\int_{\C^n} |u|^{\rho}\chi_{|u|>m}(z,t)dz\\
&\leq & \int_{\C^n} m^{-\frac{\rho}{n(n-1)}}|u|^{\frac{2n}{n-1}}dz \\
&\leq & m^{-\frac{\rho}{n(n-1)}}
\Vert u\Vert_{L^{\frac{2n}{n-1}}(\C^n)}^{\frac{(n^2-n+1)\rho}{n(n-1)}}\\
&\leq & m^{-\frac{\rho}{n(n-1)}}
\Vert u\Vert_{\tilde{W}_{\LL}^{1,2}(\C^n)}^{\frac{(n^2-n+1)\rho}{n(n-1)}}\\
\Vert u\chi_{|u|>m}(\cdot,t)\Vert_{L^{\rho}} &\leq & m^{-\frac{1}{n(n-1)}}
\Vert u\Vert_{\tilde{W}_{\LL}^{1,2}(\C^n)}^{\frac{(n^2-n+1)}{n(n-1)}}. \Eea
By taking $L^{\gamma}$-norm in the $t$-variable we have 
\bea \label{normint}\Vert u\chi_{|u|>m}\Vert_{L^{\gamma}\left(I,L^{\rho}(\C^n)\right)} &\leq & |I|^{\frac{n-1}{2n}}m^{-\frac{1}{n(n-1)}}
\Vert u\Vert_{L^{\infty}\left(I,\tilde{W}_{\LL}^{1,2}(\C^n)\right)}^{\frac{(n^2-n+1)}{n(n-1)}}.
\eea
By taking $L^{\gamma'}$-norm in the $t$-variable in estimate (\ref{norminz}) and using H\"{o}lder's inequality, we get 
\Bea \Vert G_m(z,u)-G(z,u)\Vert_{L^{\gamma'}\left(I,L^{\rho'}\right)} &\leq & C\Vert u\chi_{|u|>m}\Vert_{L^{\gamma}\left(I,L^{\rho}\right)} \Vert u\Vert_{L^{\gamma}\left(I,\tilde{W}_{\LL}^{1,\rho}\right)}^{\frac{2}{n-1}}.
\Eea
By using inequality (\ref{normint}) in the above inequality, we get the
 desired estimate.

\end{proof}

\begin{Lemma}\label{estimates4} Let $u\in L^{\gamma}\left(I,\tilde{W}_{\LL}^{1,\rho}(\C^n)\right)$ for some interval $I$. Then for each $m\in\Z_{\geq 0}$, $G_m(z,u(z,t))\in L^{\gamma'}\left(I,\tilde{W}_{\LL}^{1,\rho'}(\C^n)\right)$ and following estimates hold:
\bea\label{estimates41}\Vert SG_m(z,u(z,t))\Vert_{L^{\gamma'}\left(I,L^{\rho'}(\C^n)\right)} \leq  C\Vert u\Vert_{L^{\gamma}\left(I,\tilde{W}_{\LL}^{1,\rho}(\C^n)\right)}^{\frac{n+1}{n-1}}\\
\label{estimates42}\Vert G_m(z,u(z,t))\Vert_{L^{\gamma'}\left(I,\tilde{W}_{\LL}^{1,\rho'}(\C^n)\right)} \leq  C\Vert u\Vert_{L^{\gamma}\left(I,\tilde{W}_{\LL}^{1,\rho}(\C^n)\right)}^{\frac{n+1}{n-1}}
\eea
where $S=Z_j,\overline{Z}_j$ or $Id$, $1\leq j \leq n$ and constant $C$ is 
independent of $u$ and $I$.
\end{Lemma}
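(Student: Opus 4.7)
The plan is to reduce both estimates to a single pointwise bound
\[ |S G_m(z, u)| \leq C |u|^{2/(n-1)} \bigl( |u| + |Z_j u| + |\bar Z_j u| \bigr), \]
valid a.e.\ on $\{u\neq 0\}$ for each $S \in \{Id, Z_j, \bar Z_j\}$, and then push it through the same H\"{o}lder-in-$z$ / Sobolev embedding / H\"{o}lder-in-$t$ chain already used in Lemma \ref{estimates2}. The exponents are tuned exactly for this: the identities
\[ \frac{1}{\rho'} = \frac{1}{\rho} + \frac{n-1}{n^2}, \qquad \frac{n\gamma}{n-1} = \frac{2n\rho}{2n-\rho}, \qquad \gamma' \cdot \frac{n+1}{n-1} = \gamma \]
allow the final norm to collapse to $\Vert u\Vert_{L^\gamma(I,\tilde W_\LL^{1,\rho})}^{(n+1)/(n-1)}$ without losing or gaining any factors.

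For the pointwise step, the case $S = Id$ follows at once from (\ref{nlc3}) as $|G_m(z,u)|\leq C|u|^{(n+1)/(n-1)}$. For $S = Z_j$, I differentiate $G_m(z,u(z)) = \psi_m(x,y,|u|)\,u$ via Leibniz and the chain rule, obtaining on $\{u\neq 0\}$
\[ Z_j G_m = u\,(\partial_{z_j}\psi_m)(z,|u|) + u\,(\partial_\sigma\psi_m)(z,|u|)\,\partial_{z_j}|u| + \psi_m(z,|u|)\,Z_j u. \]
The first and third summands are controlled by $C|u|^{(n+1)/(n-1)}$ and $C|u|^{2/(n-1)}|Z_j u|$ respectively via (\ref{nlc3}). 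For the middle summand I rewrite $u\,(\partial_\sigma\psi_m)(z,|u|) = (\sigma\partial_\sigma\psi_m)(z,|u|)\cdot u/|u|$, apply $|\sigma\partial_\sigma\psi_m|\leq C|u|^{2/(n-1)}$, and use the pointwise bound $|\partial_{z_j}|u||\leq C(|Z_j u|+|\bar Z_j u|)$ derived exactly as in the proof of Lemma \ref{embedding1} (applied to $u_\epsilon = e^{-\epsilon\LL}u$ and passed to the limit). The case $S=\bar Z_j$ is identical modulo conjugation.

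With the pointwise bound in hand, H\"{o}lder in $z$ with the split $1/\rho' = 1/\rho + (n-1)/n^2$ gives
\[ \Vert SG_m(\cdot,t)\Vert_{L^{\rho'}} \leq C\Vert u(\cdot,t)\Vert_{L^{n\gamma/(n-1)}}^{2/(n-1)}\bigl(\Vert u\Vert_{L^\rho} + \Vert Z_j u\Vert_{L^\rho} + \Vert \bar Z_j u\Vert_{L^\rho}\bigr), \]
and Lemma \ref{embedding1} at $p_1=\rho$, $p_2 = n\gamma/(n-1) = 2n\rho/(2n-\rho)$ upgrades $L^{n\gamma/(n-1)}$ to $\tilde W_\LL^{1,\rho}$, yielding $\Vert SG_m(\cdot,t)\Vert_{L^{\rho'}}\leq C\Vert u(\cdot,t)\Vert_{\tilde W_\LL^{1,\rho}}^{(n+1)/(n-1)}$. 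Since $\gamma'(n+1)/(n-1)=\gamma$, taking $L^{\gamma'}_t$ of this pointwise (in $t$) inequality gives (\ref{estimates41}); summing over $S\in\{Id,Z_j,\bar Z_j\}$ and invoking the definition of the $\tilde W_\LL^{1,\rho'}$ norm yields (\ref{estimates42}). The one genuinely delicate point is the Leibniz expansion for $Z_j G_m$: it produces three structurally different summands, each of which must accept the same power $|u|^{2/(n-1)}$. This is precisely why (\ref{nlc2}) is formulated symmetrically for $\psi$, $\partial_{x_j}\psi$, $\partial_{y_j}\psi$, and $\eta\partial_\eta\psi$, and (\ref{nlc3}) confirms this growth is inherited uniformly in $m$, so that no constant depends on $m$.
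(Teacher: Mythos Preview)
Your proof is correct and follows essentially the same approach as the paper: establish the pointwise bound $|SG_m(z,u)|\leq C|u|^{2/(n-1)}(|u|+|Z_ju|+|\bar Z_ju|)$ and then run the H\"older/embedding argument of Lemma~\ref{estimates2}. The paper simply cites Lemma~3.4 of \cite{pkrvks1} for the pointwise bound, whereas you work it out explicitly via Leibniz and the chain rule, but the substance is identical.
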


\begin{proof} Since $\psi_m,\partial_{x_j}\psi_m,\partial_{y_j}\psi_m,|u|\partial_{|u|}\psi_m$ satisfy estimate (\ref{nlc3}), therefore  we have 
$$ |SG_m(z,u)|\leq C|u|^{\frac{2}{n-1}}(|u|+|Z_ju|+|\bar{Z}_ju|)$$
where $S=Z_j,\overline{Z}_j$ ($1\leq j \leq n$) or $Id$,
see Lemma 3.4 in \cite {pkrvks1}.
Now estimate (\ref{estimates41}) follows from H\"{o}lder's inequality and embedding theorem (Lemma \ref{embedding1}) 
as we used in the proof of Lemma \ref{estimates2}. Estimate (\ref{estimates42}) is a consequence of estimate (\ref{estimates41}).
\end{proof}

\begin{Proposition}\label{convergenceinmixed0} Let $I$ be a bounded interval 
such that $t_0\in\overline{I}$. 
\begin{description}
\item[(i)] If $u,v\in L^{\gamma}\left(I,\tilde{W}_{\LL}^{1,\rho}(\C^n)\right)$, then
$\mathcal{H}_mu-\mathcal{H}_mv\in L^{q}\left(I,L^p(\C^n)\right)$ for every admissible pair $(q,p)$, for every $m\in\Z_{\geq 0}$ and following estimate holds:
\bea\label{convergenceinmixed01}
&&\Vert\mathcal{H}_mu-\mathcal{H}_mv\Vert_{L^q\left(I,L^p(\C^n)\right)}\\
&\leq &C\Vert u-v\Vert_{L^{\gamma}\left(I,L^{\rho}(\C^n)\right)} \left(\Vert u\Vert_{L^{\gamma}\left(I,\tilde{W}_{\LL}^{1,\rho}(\C^n)\right)}+\Vert v\Vert_{L^{\gamma}\left(I,\tilde{W}_{\LL}^{1,\rho}(\C^n)\right)}\right)^{\frac{2}{n-1}}.\nonumber
\eea
\item[(ii)] If $u\in L^{\infty}\left(I,\tilde{W}_{\LL}^{1,2}(\C^n)\right)\cap L^{\gamma}\left(I,\tilde{W}_{\LL}^{1,\rho}(\C^n)\right)$, then $\mathcal{H}_mu-\mathcal{H}u\in L^{q}\left(I,L^p(\C^n)\right)$ for every admissible pair $(q,p)$, for every $m\in\Z_{\geq 1}$ and following estimate holds
\bea\label{convergenceinmixed02}
&&\Vert \mathcal{H}_mu-\mathcal{H}u\Vert_{L^q\left(I,L^p(\C^n)\right)}\\
&\leq &C|I|^{\frac{n-1}{2n}}m^{-\frac{1}{n(n-1)}}\Vert u\Vert_{L^{\infty}\left(I,\tilde{W}_{\LL}^{1,2}(\C^n)\right)}^
{\frac{n^2-n+1}{n(n-1)}}
\Vert u\Vert_{L^{\gamma}\left(I,\tilde{W}_{\LL}^{1,\rho}(\C^n)\right)}^{\frac{2}{n-1}}\nonumber        \eea
\end{description}
where constant $C$ is independent of $u,v,m$ and $t_0$.
\end{Proposition}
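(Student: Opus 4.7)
The plan is to exploit the fact that the linear propagator term $e^{-i(t-t_0)\mathcal{L}}f$ cancels in both of the differences $\mathcal{H}_m u - \mathcal{H}_m v$ and $\mathcal{H}_m u - \mathcal{H} u$, so that in each case we are left with a pure Duhamel integral whose integrand has already been estimated in the preceding lemmas. Both bounds then reduce to one application of the inhomogeneous Strichartz estimate for $e^{-it\mathcal{L}}$ (available for the twisted Laplacian from \cite{pkrvks1}) together with Lemma \ref{estimates2} or Lemma \ref{estimates3}.

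For part (i), write
$$\mathcal{H}_m u(z,t) - \mathcal{H}_m v(z,t) = -i\int_{t_0}^t e^{-i(t-s)\mathcal{L}}\bigl[G_m(z,u(z,s)) - G_m(z,v(z,s))\bigr]\,ds.$$
The inhomogeneous Strichartz estimate, applied with an arbitrary admissible pair $(q,p)$ on the output side and the admissible pair $(\gamma,\rho)$ on the input side, gives
$$\Bigl\|\int_{t_0}^t e^{-i(t-s)\mathcal{L}} F(\cdot,s)\,ds\Bigr\|_{L^q(I,L^p(\C^n))} \leq C\,\|F\|_{L^{\gamma'}(I,L^{\rho'}(\C^n))}$$
with a constant $C$ depending only on $(q,p)$ and $(\gamma,\rho)$. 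Taking $F = G_m(\cdot,u) - G_m(\cdot,v)$ and invoking Lemma \ref{estimates2} to control $\|F\|_{L^{\gamma'}(I,L^{\rho'})}$ yields (\ref{convergenceinmixed01}) immediately. Part (ii) is identical in structure: write $\mathcal{H}_m u - \mathcal{H} u$ as the Duhamel integral of $G_m(\cdot,u) - G(\cdot,u)$, apply the same Strichartz inequality, and then bound the $L^{\gamma'}(I,L^{\rho'})$ norm of the integrand by Lemma \ref{estimates3}. The $|I|^{(n-1)/(2n)} m^{-1/(n(n-1))}$ factor comes directly from that lemma.

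I do not expect a real obstacle: the only point requiring attention is that the constant $C$ in each of (\ref{convergenceinmixed01}) and (\ref{convergenceinmixed02}) must be independent of $m$, $u$, $v$, and $t_0$. Independence of $m$ follows because the bounds (\ref{lip1}) and (\ref{nlc3}) feeding into Lemmas \ref{estimates2} and \ref{estimates3} are uniform in $m$; independence of $t_0$ follows because the Strichartz constant for $e^{-it\mathcal{L}}$ is translation invariant in time; and independence of $u,v$ is already explicit in the statements of the two lemmas. One should also note that the conclusion covers \emph{every} admissible pair $(q,p)$ on the left-hand side, which is the standard dual-Strichartz/Christ--Kiselev form of the inequality used throughout \cite{pkrvks1} and requires no modification here.
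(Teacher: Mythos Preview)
Your argument is correct and matches the paper's proof essentially verbatim: the paper simply says that (\ref{convergenceinmixed01}) follows from the Strichartz estimates (Theorem~3.3 in \cite{pkrvks1}) combined with Lemma~\ref{estimates2}, and (\ref{convergenceinmixed02}) from the Strichartz estimates combined with Lemma~\ref{estimates3}. Your added remarks on the uniformity of $C$ in $m,u,v,t_0$ are accurate and consistent with the paper's standing conventions.
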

\begin{proof} Estimate (\ref {convergenceinmixed01}) follows from Strichartz estimates (Theorem 3.3 in \cite{pkrvks1}) and Lemma \ref{estimates2}, whereas estimate (\ref {convergenceinmixed02}) follows from Strichartz estimates and Lemma \ref{estimates3}. 
\end{proof}

Now we state the following Proposition, which is useful in proving stability. 

\begin{Proposition}\label{forstability1}
 Let $\Phi$ be a continuous complex valued function on $\C$ such that $|\Phi(w)|\leq C|w|^{\frac{2}{n-1}}$ with $n\geq 2$. Let
 $\{ u_m\} $ be a bounded sequence in 
 $L^{\gamma}\left(I,\tilde{W}_{\LL}^{1,\rho}(\C^n)\right)$ for some interval $I$.
If $u_m\rightarrow u$ in $L^{\gamma}(I,L^{\rho}(\C^n))$  then $u\in L^{\gamma}\left(I,\tilde{W}_{\LL}^{1,\rho}(\C^n)\right)$ and
$[\Phi(u_m)-\Phi(u)]Su \rightarrow 0$ in $L^{\gamma'}\left(I,L^{\rho'}(\C^n) \right),$ 
for  $S=\mbox{Id},Z_j,\overline{Z}_j;1\leq j\leq n$.
\end{Proposition}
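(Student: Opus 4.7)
The plan is to handle the inclusion $u\in L^{\gamma}(I,\tilde{W}_{\LL}^{1,\rho})$ by a weak-compactness argument and to reduce the convergence $[\Phi(u_m)-\Phi(u)]Su\to 0$ to a Nemytskii-type continuity statement with an interpolation twist.

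For the inclusion, since $\gamma,\rho\in(1,\infty)$ the space $L^{\gamma}(I,\tilde{W}_{\LL}^{1,\rho}(\C^n))$ is reflexive, so from the boundedness of $\{u_m\}$ I extract a weakly convergent subsequence $u_{m_k}\rightharpoonup v$. The strong convergence $u_m\to u$ in $L^{\gamma}(I,L^{\rho})$ forces $v=u$ by uniqueness of weak limits in the continuously embedded space $L^{\gamma}(I,L^{\rho})$, so $u\in L^{\gamma}(I,\tilde{W}_{\LL}^{1,\rho})$ with $\|u\|_{L^{\gamma}(I,\tilde{W}_{\LL}^{1,\rho})}\leq\liminf_m \|u_m\|_{L^{\gamma}(I,\tilde{W}_{\LL}^{1,\rho})}$. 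In particular $Su\in L^{\gamma}(I,L^{\rho})$ for $S=\mathrm{Id},Z_j,\bar{Z}_j$, so $Su$ is a valid fixed factor in the convergence claim.

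For the convergence, applying H\"older's inequality in $z$ with $1/\rho'=1/\rho+(n-1)/n^2$ and then in $t$ with $1/\gamma'=1/\gamma+1/n$ yields
\[
\|[\Phi(u_m)-\Phi(u)]\,Su\|_{L^{\gamma'}(I,L^{\rho'})}\leq \|\Phi(u_m)-\Phi(u)\|_{L^{n}(I,L^{n^2/(n-1)})}\,\|Su\|_{L^{\gamma}(I,L^{\rho})},
\]
reducing the task to showing $\Phi(u_m)\to\Phi(u)$ in $L^{n}(I,L^{n^2/(n-1)})$. The relevant algebraic fact is $\tfrac{2}{n-1}\cdot\tfrac{n^2}{n-1}=\tfrac{n\gamma}{n-1}$, so the critical Nemytskii exponent matches the Sobolev exponent exactly: by Lemma \ref{embedding1}, both $\{u_m\}$ and $u$ lie uniformly in $L^{\gamma}(I,L^{n\gamma/(n-1)}(\C^n))$. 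After extracting a further subsequence (via a telescoping $\sum_k\|u_{m_k}-u\|_{L^{\gamma}(I,L^{\rho})}<\infty$) so that $u_m\to u$ pointwise a.e.\ on $I\times\C^n$, continuity of $\Phi$ gives $\Phi(u_m)\to\Phi(u)$ a.e. I then combine this a.e.\ convergence with the variable dominant $|\Phi(u_m)|^{n^2/(n-1)}\leq C|u_m|^{n\gamma/(n-1)}$ via a Vitali/Brezis--Lieb argument to conclude the required strong convergence.

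The main obstacle is the critical nature of the embedding: $\Phi$ sits exactly at the borderline where Nemytskii continuity $L^{n\gamma/(n-1)}\to L^{n^2/(n-1)}$ is not automatic from strong $L^{\gamma}(I,L^{\rho})$ convergence alone, since $\rho$ is strictly less than the Sobolev target $n\gamma/(n-1)$. The remedy is to interpolate: strong $L^{\gamma}(I,L^{\rho})$ convergence combined with the uniform $L^{\gamma}(I,L^{n\gamma/(n-1)})$ bound produces strong convergence in $L^{\gamma}(I,L^{r})$ for every $r\in[\rho,n\gamma/(n-1))$. Approximating $Su$ in $L^{\gamma}(I,L^{\rho})$ by a function $g\in C_c(I\times\C^n)$ controls the tail uniformly in $m$ via H\"older and the uniform bound on $\{u_m\}$, while on the compact support of $g$ the residual localizes to finite measure, where the interpolated subcritical convergence together with dominated convergence finishes the argument.
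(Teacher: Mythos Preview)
Your argument is sound and takes a genuinely different route from the paper's. For the inclusion $u\in L^{\gamma}(I,\tilde W^{1,\rho}_{\LL})$ the paper performs an explicit duality computation against test functions, equivalent to your weak-compactness argument. For the convergence, however, the paper does \emph{not} isolate $\Phi(u_m)-\Phi(u)$ by H\"older and does not approximate $Su$ by compactly supported functions: it extracts a subsequence with $\sum_k\|u_{k+1}-u_k\|_{L^{\gamma}(I,L^{\rho})}<\infty$, forms the single pointwise majorant $v=|u|+\sum_k|u_{k+1}-u_k|$, bounds $|[\Phi(u_k)-\Phi(u)]Su|^{\rho'}\leq C(v^{2/(n-1)}+|u|^{2/(n-1)})^{\rho'}|Su|^{\rho'}$, and applies dominated convergence successively in $z$ and in $t$, invoking H\"older and the embedding lemma for the integrability of the dominant. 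Your scheme is more elaborate but handles the critical exponent more transparently: the interpolation step upgrades strong convergence from $L^{\gamma}(I,L^{\rho})$ to $L^{\gamma}(I,L^{r})$ for every $r<n\gamma/(n-1)$, so after the $C_c$ approximation of $Su$ the majorant needed on the finite-measure support of $g$ can be built from a strictly \emph{subcritical} space, and the dominated-convergence steps go through without having to place $v$ in a Sobolev space. Two points to tighten: your final sentence should say concretely which $r$ to take (any $r\in[\max(\rho,\,2\rho'/(n-1)),\,n\gamma/(n-1))$ works, so that the dominant built from the telescoping sum in $L^{\gamma}(I,L^r)$ lands in $L^{\rho'}$ on each finite spatial slice, after which DCT runs first in $z$ and then in $t$ on the bounded support using $\gamma'<n$); and you should make the passage from subsequential to full-sequence convergence explicit, as the paper does, via the standard ``every subsequence has a further subsequence converging to the same limit'' argument. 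The Vitali/Brezis--Lieb remark in your second paragraph is a red herring at the critical exponent and should be dropped---your third paragraph correctly supersedes it.
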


\begin{proof}
First we will prove $u\in L^{\gamma}\left(I,\tilde{W}_{\LL}^{1,\rho}(\C^n)\right)$.
By duality argument (also see Lemma A.2.1 in \cite{GV}), we have
\bea \Vert Su\Vert_{L^{\gamma}(I,L^{\rho}(\C^n))}&=& \sup_{\phi\in 
C_c^{\infty}(\C^n\times I),\Vert \phi\Vert_{L^{\gamma'}(I,L^{\rho'}(\C^n))}\leq 1}
\left|\left<Su,\phi \right>_{z,t}\right|\nonumber\\
&=&\sup_{\phi}\left|\left<u,S^*\phi \right>_{z,t}\right|\nonumber\\  
&=&\sup_{\phi} \lim_{m\to \infty} \left|\left<u_m,S^*\phi \right>_{z,t}\right|\nonumber\\
&=&\sup_{\phi}\lim_{m\to \infty} \left|\left<Su_m,\phi \right>_{z,t}\right|\nonumber\\
&\leq &\sup_{\phi}\liminf_{m\to \infty} \Vert Su_m\Vert_{L^{\gamma}(I,L^{\rho}(\C^n))}
\Vert \phi\Vert_{L^{\gamma'}(I,L^{\rho'}(\C^n))}\nonumber\\
\label{duality}&\leq &\liminf_{m\to \infty} \Vert Su_m\Vert_{L^{\gamma}(I,L^{\rho}(\C^n))}
\eea
for $S=Z_j,\bar{Z}_j$; $1\leq j \leq n$. Therefore
\Bea \Vert u\Vert_{L^{\gamma}(I,\tilde{W}_{\LL}^{1,\rho}(\C^n))}\leq \liminf_{m\to \infty}
\Vert u_m\Vert_{L^{\gamma}(I,\tilde{W}_{\LL}^{1,\rho}(\C^n))}<\infty.
\Eea

Since $u_m\to u$ in $L^{\gamma}(I,L^{\rho}(\C^n))$, we can extract a subsequence 
still denoted by ${u_k}$ such that 
$$ \Vert u_{k+1}-u_{k}\Vert_{L^{\gamma}(I,L^{\rho}(\C^n))}\leq \frac{1}{2^k}$$ 
for all $k\geq 1$ and $u_k(z,t)\to u(z,t)$ a.e. Hence by continuity of $\Phi$,
\bea [\Phi(u_k)-\Phi(u)]Su \rightarrow 0 \hskip.1in~\mbox{for a.e} ~(z,t)\in \C^n \times I .\eea
We establish the norm convergence by appealing to a dominated convergence argument in 
$z$ and $t$ variables successively.

Consider the function  $H(z,t)=\sum_{k=1}^\infty |u_{k+1}(z,t)-u_k(z,t)|$. Clearly
  $H \in L^{\gamma}(I,L^{\rho}(\C^n))$. Also for $l>k$, $ |(u_l-u_k)(z,t)|\leq |u_l-u_{l-1}|+\cdots +|u_{k+1}-u_k|\leq H(z,t)$,
 hence $ |u_k -u| \leq H.$
This leads to the pointwise almost everywhere inequality \Bea \label{dominate} |u_k(z,t) | \leq |u(z,t)| + H(z,t)=v(z,t).\Eea
Hence 
\bea\label{cri4.5} |\left[\Phi(u_k)-\Phi(u)\right]Su(z,t)|^{\rho'} \leq C [v^{\frac{2}{n-1}}+|u|^{\frac{2}{n-1}} ]^{\rho'} ~~ |Su(z,t)|^{\rho'} .
\eea
Since $u, v \in L^{\gamma}(I,L^{\rho}(\C^n))$,  using H\"{o}lder's inequality with $\frac{1}{\rho'}=\frac{1}{\rho}+
\frac{n-1}{n^2}=\frac{1}{\rho}+\frac{2}{n-1}\cdot\frac{n-1}{n\gamma}$ and Lemma \ref{embedding1}, 
 we get
\bea \label{cri4.6}
 &&\int_{\C^n}  [v^{\frac{2}{n-1}}+|u|^{\frac{2}{n-1}} ]^{\rho'} ~~ |Su(z,t)|^{\rho'}dz \\
 && \leq 
( \| v(\cdot,t) \|_{L^{\frac{n\gamma}{n-1}}(\C^n)} +   \|u(\cdot,t) \|_{L^{\frac{n\gamma}{n-1}}(\C^n)}  )^ {\frac{\rho'\gamma}{n}} \| Su(\cdot,t)\|_{L^{\rho}(\C^n)} ^{\rho'}.\nonumber\\
&& \leq 
( \| v(\cdot,t) \|_{\tilde{W}_{\LL}^{1,\rho}(\C^n)} +   \|u(\cdot,t) \|_{\tilde{W}_{\LL}^{1,\rho}(\C^n)}  )^ {\frac{\rho'\gamma}{n}} \| Su(\cdot,t)\|_{L^{\rho}(\C^n)} ^{\rho'}<\infty\nonumber
 \eea
for a.e. $t\in I$.
Thus in view of (\ref{cri4.5}), (\ref{cri4.6}) and using dominated convergence theorem in the $z$-variable, 
we see that  
 \bea \label{cri4.8}\Vert \left[\Phi(u_k)-\Phi(u)\right]Su(\cdot,t)\Vert_{L^{p'}(\C^n)}
\rightarrow 0\eea
 as $k\rightarrow \infty,$ for a.e. $t$.

Again, in view of  (\ref{cri4.5}) and (\ref{cri4.6}), we get
\Bea
 &&\| [\Phi(u_k)-\Phi(u)]\, Su(\cdot,t) \|_{L^{\rho'}(\C^n)}\\ 
 &&\leq C( \| v(\cdot,t) \|_{\tilde{W}_{\LL}^{1,\rho}(\C^n)} +   \|u(\cdot,t) \|_{\tilde{W}_{\LL}^{1,\rho}(\C^n)}  )^ {\frac{\gamma}{n}} \| Su(\cdot,t)\|_{L^{\rho}(\C^n)}.
\Eea
Since $\frac{1}{\gamma'}=\frac{1}{\gamma}+\frac{1}{n}$, an application of the H\"{o}lder's inequality in the $t$-variable shows that 
\Bea &&\| [\Phi(u_k)-\Phi(u)]\, Su \|_{L^{\gamma'}(I,L^{\rho'}(\C^n))}\\ 
 &&\leq C(\| v \|_{L^{\gamma}(I,\tilde{W}_{\LL}^{1,\rho}(\C^n))} +   \|u \|_{L^{\gamma}(I,\tilde{W}_{\LL}^{1,\rho}(\C^n))}  )^ {\frac{\gamma}{n}} \| Su\|_{L^{\gamma}(I,L^{\rho}(\C^n))}.
\Eea 
Hence a further application of dominated convergence theorem with
(\ref{cri4.8}) shows  that $\Vert \left(\Phi(u_k)-\Phi(u)\right)Su\Vert_{L^{\gamma'}(I,L^{\rho'})}\rightarrow 0$, as
$k\rightarrow \infty$. 

Thus we have shown that $\left[\Phi(u_{m_k})-\Phi(u)\right]Su\rightarrow 0$
in $L^{\gamma'}(I,L^{\rho'}(\C^n))$ for some subsequence $u_{m_k}$
whenever  $u_m \to u$ in $L^{\gamma}(I,L^{\rho}(\C^n))$. But the above arguments are also valid if we had started with any subsequence of $u_m$. 
It follows that any subsequence of $\left[\Phi(u_{m})-\Phi(u)\right]Su$ has a  subsequence that converges to $0$
in $L^{\gamma'}(I,L^{\rho'}(\C^n))$. From this we conclude that the original sequence 
$\left[\Phi(u_m)-\Phi(u)\right]Su$ converges to zero in $L^{\gamma'}(I,L^{\rho'}(\C^n))$, hence the proposition.
  \end{proof}

\section{Proof of Theorem \ref{criticallocal}}

\begin{proof}{(of Theorem \ref{criticallocal}): (Local existence):}
Since $G_m(z,\cdot):\C\to\C$ is globally lipschitz for each $m\geq 1$, 
see (\ref{lip}), therefore from
 \cite{pkrvks1} it follows that there exists a unique global solution 
$u_m\in C(\R,\tilde{W}_{\LL}^{1,2}(\C^n))$ of the initial value problem
\bea \label{cmainpdem}i\partial_tv(z,t)\!\! -\!\!\!\LL
v(z,t)&=&G_m(z, v), ~~~~~~~~~~~~z \in \C^n, ~t \in \R \\
\label{cinidatam} v(\cdot,t_0)&=& f. 
\eea
Furthermore $\mathcal{H}_mu_m=u_m$ and  $u_m\in L_{\small\mbox{loc}}^q(\R,\tilde{W}_{\LL}^{1,p}(\C^n))$ 
for every admissible pair $(q,p)$. 
We deduce from Lemma \ref{estimates4} and Strichartz estimates (Theorem 3.3 in \cite{pkrvks1}) that
\bea\label{local01}
\begin{split}&\Vert u_m\Vert_{L^q\left((t_0,t_0+T),\tilde{W}_{\LL}^{1,p}(\C^n)\right)}\\
&\leq \Vert e^{-i(t-t_0)\LL}f\Vert_{L^q\left((t_0,t_0+T),\tilde{W}_{\LL}^{1,p}(\C^n)\right)}
+C\Vert u_m\Vert_{L^{\gamma}\left((t_0,t_0+T),\tilde{W}_{\LL}^{1,\rho}(\C^n)\right)}^{\frac{n+1}{n-1}}.\end{split}
\eea
Let $l\geq m$, we see that
\Bea  u_m-u_l=(\mathcal{H}_m(u_m)-\mathcal{H}_m(u_l))
+(\mathcal{H}_m(u_l)-\mathcal{H}(u_l))+(\mathcal{H}(u_l)-\mathcal{H}_l(u_l)).\Eea
From Proposition \ref{convergenceinmixed0}, we deduce that
\bea\label{local3} 
\hspace{-.2cm}\Vert u_m-u_l\Vert_{L^q((t_0,t_0+T),L^p(\C^n))}
\leq C\left(\Vert u_m\Vert_{L^{\gamma}((t_0,t_0+T),\tilde{W}_{\LL}^{1,\rho})}
+\Vert u_l\Vert_{L^{\gamma}((t_0,t_0+T),\tilde{W}_{\LL}^{1,\rho})}\right)^{\frac{2}{n-1}}\times\nonumber\\
\left(\Vert u_m-u_l\Vert_{L^{\gamma}((t_0,t_0+T),L^{\rho})}
+T^{\frac{n-1}{2n}}m^{-\frac{1}{n(n-1)}}\Vert u_l\Vert_{L^{\infty}((t_0,t_0+T),\tilde{W}_{\LL}^{1,2})}^{\frac{n^2-n+1}{n(n-1)}}\right).
\eea
We choose $T\leq\pi$, therefore we can take constant $C$ to be independent of $T$.
Let $\tilde{C}$ be larger than the constant $C$ that appear in (\ref{local01}),
 (\ref{local3}), (\ref{convergenceinmixed01}), (\ref{convergenceinmixed02}) 
and in Strichartz estimates for the particular choice of the admissible pair $(q,p)=(\gamma,\rho)$.
Fixed $\delta$ small enough so that
\bea\label{delta}\tilde{C}(4\delta)^{\frac{2}{n-1}} <\frac{1}{2}. \eea
We claim that if $0<T\leq \pi$ is such that 
\bea\label{delta1}\Vert e^{-i(t-t_0)\LL}f\Vert_{L^{\gamma}\left((t_0,t_0+T),\tilde{W}_{\LL}^{1,\rho}(\C^n)\right)}\leq \delta \eea
then
\bea\label{claim1} \sup_{m\geq 1} \Vert u_m\Vert_{L^{\gamma}\left((t_0,t_0+T),\tilde{W}_{\LL}^{1,\rho}(\C^n)\right)}&\leq &2\delta\\
\label{claim2}\sup_{m\geq 1}\Vert u_m\Vert_{L^q\left((t_0,t_0+T),\tilde{W}_{\LL}^{1,p}(\C^n)\right)}&<&\infty
\eea 
for every admissible pair $(q,p)$.
Let $\theta_m(t)=
\Vert u_m\Vert_{L^{\gamma}\left((t_0,t_0+t),\tilde{W}_{\LL}^{1,\rho}(\C^n)\right)}$. 
From (\ref{local01}), we see that
\Bea \theta_m(t)\leq \delta+\tilde{C}\theta_m(t)^{\frac{n+1}{n-1}}.
\Eea
 If $\theta_m(t)=2\delta$ for some $t\in (t_0,t_0+T]$, then 
\Bea 2\delta\leq \delta+\tilde{C}(2\delta)^{\frac{n+1}{n-1}}<2\delta
\Eea
which is a contradiction. Since $\theta_m$ is a continuous function 
with $\theta_m(t_0)=0$ therefore we conclude that $\theta_m(t)<2\delta$ 
for all $t\in (t_0,t_0+T]$, which proves (\ref{claim1}). From (\ref{local01}), we see that
\Bea \sup_{m}\Vert u_m\Vert_{L^q\left((t_0,t_0+T),\tilde{W}_{\LL}^{1,p}(\C^n)\right)}
&\leq &\Vert e^{-i(t-t_0)\LL}f\Vert_{L^q\left((t_0,t_0+T),\tilde{W}_{\LL}^{1,p}(\C^n)\right)}+C(2\delta)^{\frac{n+1}{n-1}}\\
&\leq &C(q,p,n,\delta,f)<\infty.
\Eea
This proves (\ref{claim2}). Put $(q,p)=(\gamma,\rho)$ in (\ref{local3}), we see that 
\Bea \Vert u_m-u_l\Vert_{L^{\gamma}\left((t_0,t_0+T),L^{\rho}(\C^n)\right)}&\leq 
&\frac{1}{2}\left(\Vert u_m-u_l\Vert_{L^{\gamma}\left((t_0,t_0+T),L^{\rho}(\C^n)\right)}+CT^{\frac{n-1}{2n}}m^{-\frac{1}{n(n-1)}}\right)\\
&\leq & 2CT^{\frac{n-1}{2n}}m^{-\frac{1}{n(n-1)}}\to 0 \mbox{~~as~~}m\to\infty.
\Eea
This shows that $u_m$ is a cauchy sequence in $L^{\gamma}\left((t_0,t_0+T),L^{\rho}(\C^n)\right)$ 
and from (\ref{local3}) it is also cauchy in $L^{q}\left((t_0,t_0+T),L^{p}(\C^n)\right)$ 
for every admissible pair $(q,p)$. Let $u$ be its limit, then $u_m\to u$ in 
$L^{q}\left((t_0,t_0+T),L^{p}(\C^n)\right)$ for every admissible pair  $(q,p)$. 
By duality 
argument (see (\ref{duality})) and from estimates (\ref{claim1}), (\ref{claim2}), we have
\bea\label{uclaim1} \Vert u\Vert_{L^{\gamma}\left((t_0,t_0+T),
\tilde{W}_{\LL}^{1,\rho}(\C^n)\right)}&\leq &2\delta\\
\label{uclaim2}\Vert u\Vert_{L^q\left((t_0,t_0+T),\tilde{W}_{\LL}^{1,p}(\C^n)\right)}&<&\infty.
\eea 
From Lemma \ref{estimates4}, $G_m(z,u(z,t))\in L^{\gamma'}
\left((t_0,t_0+T),\tilde{W}_{\LL}^{1,\rho'}(\C^n)\right)$ 
for each $m\geq 0$. From Strichartz estimates
(Theorem 3.3 in \cite{pkrvks1}) and (\ref{opr}), 
$\mathcal{H}u\in L^q((t_0,t_0+T),\tilde{W}_{\LL}^{1,p}(\C^n))$  for every admissible pair $(q,p)$.

From Lemma \ref{estimates2} $\Vert G_m(z,u_m)-G_m(z,u)\Vert_{L^{\gamma'}\left((t_0,t_0+T),L^{\rho'}(\C^n)\right)}\to 0$ 
and from Lemma \ref{estimates3}, 
$\Vert G_m(z,u)-G(z,u)\Vert_{L^{\gamma'}\left((t_0,t_0+T),L^{\rho'}(\C^n)\right)}\to 0$ as $m\to \infty$. 
Therefore 
\Bea\Vert G_m(z,u_m)-G(z,u)\Vert_{L^{\gamma'}((t_0,t_0+T),L^{\rho'})}\to 
0~~\mbox{~~as~~}~~ m\to \infty.\Eea
 Since $u_m=\mathcal{H}_mu_m$ for each $m\geq 1$, from 
Strichartz estimates we deduce that 
\Bea \Vert u_m-\mathcal{H}u\Vert_{L^q\left((t_0,t_0+T),
L^p(\C^n)\right)}&=&\Vert\mathcal{H}_mu_m-\mathcal{H}u\Vert_{L^q\left((t_0,t_0+T),L^p(\C^n)\right)}\\
&\leq & C\Vert G_m(z,u_m)-G(z,u)\Vert_{L^{\gamma'}((t_0,t_0+T),L^{\rho'})}\to 0
\Eea
as $m\to\infty$. Therefore for $t\in (t_0,t_0+T)$
\bea\label{oprnorm1} u=\mathcal{H}u=e^{-i(t-t_0)\mathcal{L}}f (z)
-i\displaystyle\int_{t_0}^t e^{-i(t-s)\mathcal{L}}G(z,u(z,s))ds.\eea
From Strichartz estimates and estimate (\ref{uclaim2}), $u\in C([t_0,t_0+T],
\tilde{W}_{\LL}^{1,2})\cap L^q((t_0,t_0+T),\tilde{W}_{\LL}^{1,p}(\C^n))$ 
for every admissible pair $(q,p)$. 
In view of Lemma A.1 in \cite{pkrvks}, $u$ is also a solution to the initial value problem 
(\ref{cmainpde}), (\ref{cinidata}).
Similarly solution exists on the 
interval $[t_0-T',t_0]$ for some $T'>0$. Now we continue this process with initial time 
$t_0+T$ and $t_0-T'$. In this way we construct maximal solution 
$u\in C((T_*,T^*),\tilde{W}_{\LL}^{1,2})\cap  L_{{\small\mbox{loc}}}^{q}
\left((T_*,T^*), \tilde{W}_{\LL}^{1,p}(\C^n)\right)$ for every admissible pair $(q,p)$.

{\bf Conservation of charge and energy:}
From \cite{pkrvks1}, we have following conservation laws
\bea
\label{conserv1}\Vert u_m(\cdot,t)\Vert_{L^2(\C^n)}=\Vert f\Vert_{L^2(\C^n)}, 
~~t\in  \R\\
\label{conserv2}E_m(u_m(\cdot,t))=E_m(f), ~~t\in \R
\eea   
where
\bea
\hspace{.5cm}E_m(f)&=&\frac{1}{4}\sum_{j=1}^{n}
\left(\Vert Z_jf\Vert_{L^2(\C^n)}^2+\Vert \bar{Z}_jf\Vert_{L^2(\C^n)}^2 \right) 
+\int_{\C^n}\tilde{G_m}(z,|f(z)|)dz\\
\hspace{.5cm}\tilde{G_m}(z,\sigma)&=&\int_{s=0}^{\sigma}G_m(z,s)ds, ~~\sigma>0 .\nonumber
\eea
Since $u_m\to u$ in $L^{\infty}([t_0-T,t_0+T],L^2(\C^n))$ for sufficiently small $T>0$, 
therefore by taking limit $m\to\infty$ in (\ref{conserv1}), we get 
\Bea \Vert u(\cdot,t)\Vert_{L^2(\C^n)}=\Vert f\Vert_{L^2(\C^n)},~~t\in [t_0-T,t_0+T].
\Eea
By repeating this argument for any point in $(T_*,T^*)$, instead of $t_0$, 
we get conservation of charge on $(T_*,T^*)$.

Now we will prove conservation of energy. 
From (\ref{claim2}), for each $t\in (t_0-T,t_0+T)$, sequence 
$\Vert u_m(\cdot,t)\Vert_{\tilde{W}_{\LL}^{1,2}(\C^n)} $ is uniformly bounded and 
$u_m(\cdot,t)\to u(\cdot,t)$ in $L^2(\C^n)$, therefore by duality argument 
(see (\ref{duality})), we have
\Bea
\sum_{j=1}^{n}
(\Vert Z_ju(\cdot,t)\Vert_{L^2}^2+\Vert \bar{Z}_ju(\cdot,t)\Vert_{L^2}^2 ) 
\leq \liminf_{m\to\infty} \sum_{j=1}^{n}
(\Vert Z_ju_m(\cdot,t)\Vert_{L^2}^2+\Vert \bar{Z}_ju_m(\cdot,t)\Vert_{L^2}^2 ).
\Eea
Since $|\tilde{G_m}(z,|f(z)|)|\leq C |f|^{\frac{2n}{n-1}}$, 
$f\in \tilde{W}_{\LL}^{1,2}(\C^n)\subset L^{\frac{2n}{n-1}}(\C^n)$, therefore by dominated convergence theorem, 
$\int_{\C^n}\tilde{G_m}(z,|f(z)|)dz\to \int_{\C^n}\tilde{G}(z,|f(z)|)dz
$ as $m\to \infty$.
Since $u_m\to u$ in $L^q((t_0-T,t_0+T),L^p(\C^n))$ for every admissible pair, 
therefore after choosing a suitable subsequence, we have pointwise convergence 
$u_m(z,t)\to u(z,t)$ for a.e. $(z,t)$, see Proposition \ref{forstability1}
and Theorem 4.9 in Brezis \cite {BR}. Now we observe that
\Bea
\tilde{G}_m(z,|u_m(z,t)|)-\tilde{G}(z,|u_m(z,t)|)=
\int_{0}^{|u_m|}(G_m(z,\sigma)-G(z,\sigma))d\sigma\to 0
\Eea
pointwise as $m\to \infty$. Similarly $\tilde{G}(z,|u_m(z,t)|)-\tilde{G}(z,|u(z,t)|)\to 0
$ pointwise as $m\to \infty$, therefore $\tilde{G_m}(z,|u_m(z,t)|)-\tilde{G}(z,|u(z,t)|)
 \to 0$ pointwise as $m\to \infty$. By Fatou's Lemma, we have
\Bea \int_{\C^n}\tilde{G}(z,|u(z,t)|)dz\leq \liminf \int_{\C^n}\tilde{G_m}
(z,|u_m(z,t)|)dz.
\Eea
By above observations and energy conservation $E_m(u_m(\cdot,t))=E_m(f)$, 
we have
$E(u(\cdot,t))\leq \liminf E_m(u_m(\cdot,t))=\lim_{m\to \infty}E_m(f)=E(f)$ for $t\in (t_0-T,t_0+T)$. 
This shows that $t\to E(u(\cdot,t))$ has local maximum at $t_0$. 
But we can repeat this argument with any point in $(T_*,T^*)$, therefore
$t\to E(u(\cdot,t))$ has local maximum at every point of $(T_*,T^*)$. 
Since  $t\to E(u(\cdot,t))$ is continuous, therefore $E(u(\cdot,t))=E(f)$ for every $t\in (T_*,T^*)$.

\noindent{\bf Uniqueness:}
Uniqueness in $C((T_*,T^*),\tilde{W}_{\LL}^{1,2}(\C^n))\cap 
L_{\small\mbox{loc}}^{\gamma}\left((T_*,T^*),\tilde{W}_{\LL}^{1,\rho}(\C^n)\right)$ 
will follow from estimate (\ref{convergenceinmixed01}) with $m=0$
in Proposition \ref{convergenceinmixed0},
see uniqueness in \cite{pkrvks}.

\noindent{\bf Blowup alternative:} We prove blowup alternative by method of contradiction.
Let us assume that $T^*<\infty$ and $u\in L^{q}((t_0,T^*),\tilde{W}_{\LL}^{1,p})$ 
for some admissible pair $(q,p)$ with $2<p$ and 
$\frac{1}{q}=n\left(\frac{1}{2}-{1\over
p}\right)$.
Since $2< p<  \frac{2n}{n-1}$, $n\geq 2$, so $p<2n$.
 We choose admissible pair $(q_1,p_1)$ as follows
\Bea \frac{1}{p'_1}=\frac{1}{p_1}+\frac{2}{n-1}\left(\frac{1}{p}-\frac{1}{2n}\right),~~
  \frac{1}{q'_1}=\frac{1}{q_1}+\frac{2}{n-1}\frac{1}{q}.
\Eea
 Let us choose $s$ and $t$ such that $t_0\leq s <t<T^*$. Since 
$|S_jG(z,u(z,t))|\leq C|u|^{\frac{2}{n-1}}(|u|+|Z_ju|+|\overline{Z}_ju|)$ for $S_j=Id, Z_j,\overline{Z}_j $ ($1\leq j \leq n$) 
(see Lemma 3.4 in \cite{pkrvks1}),  
therefore by Lemma \ref{embedding1} and H\"{o}lder's inequality we see that 
\bea \label{blowup1}
\hspace{1cm}\Vert G(z,u(z,\tau))\Vert_{L^{q_1'}((s,t),\tilde{W}_{\LL}^{1,p_1'})}
\leq C\Vert u\Vert_{L^{q_1}((s,t),\tilde{W}_{\LL}^{1,p_1})}\Vert u\Vert_{L^{q}((s,t),\tilde{W}_{\LL}^{1,p})}^{\frac{2}{n-1}}.
\eea
 Since $(t_0,T^*)$ is a bounded interval, so we can choose constant 
$C$ independent of $s$ and $t$, where $t_0\leq s<t<T^*$.
Now we see that  
$$ u(z,\tau)=e^{-i(\tau-s)\mathcal{L}}u(\cdot,s)(z)
-i\displaystyle\int_{s}^{\tau} e^{-i(\tau-s_1)\mathcal{L}}G(z,s_1,u(z,s_1))ds_1.$$
Therefore we deduce from Strichartz estimates that 
\Bea \Vert u\Vert_{L^{q_1}\left((s,t),\tilde{W}_{\LL}^{1,p_1}\right)}\leq 
C\Vert u(\cdot,s)\Vert_{\tilde{W}_{\LL}^{1,2}}+C\Vert u\Vert_{L^{q_1}\left((s,t),\tilde{W}_{\LL}^{1,p_1}\right)}
\Vert u\Vert_{L^{q}\left((s,t),\tilde{W}_{\LL}^{1,p}\right)}^{\frac{2}{n-1}}.
\Eea
 where constant $C$ is independent of $s$ and $t$. 
Since $p\neq 2$, so $q<\infty$ and $u\in L^{q}\left((t_0,T^*),
\tilde{W}_{\LL}^{1,p}(\C^n)\right)$, therefore we choose $s$ sufficiently close to $T^*$ such that 
$$C\Vert u\Vert_{L^{q}\left((s,T^*),\tilde{W}_{\LL}^{1,p}(\C^n)\right)}^{\frac{2}{n-1}}\leq \frac{1}{2} .$$
Therefore we get  
$$\Vert u\Vert_{L^{q_1}\left((s,t),\tilde{W}_{\LL}^{1,p_1}(\C^n)\right)}
\leq 2C\Vert u(\cdot,s)\Vert_{\tilde{W}_{\LL}^{1,2}}.$$
Since RHS is independent of $t\in (s,T^*)$, so we have 
$u\in L^{q_1}\left((s,T^*),\tilde{W}_{\LL}^{1,p_1}(\C^n)\right)$. 
Therefore $u\in L^{q_1}\left((t_0,T^*),\tilde{W}_{\LL}^{1,p_1}\right)$ and 
$G(z,u(z,\tau))\in L^{q_1'}\left((t_0,T^*),\tilde{W}_{\LL}^{1,p_1'}\right)$ 
follows from (\ref{blowup1}).
Now from Strichartz estimates and (\ref{oprnorm1}), 
$u\in L^{\tilde{q}}((t_0,T^*),
$ $\tilde{W}_{\LL}^{1,\tilde{p}}(\C^n))\cap C([t_0,T^*],\tilde{W}_{\LL}^{1,2}(\C^n))$ 
for every admissible pair $(\tilde{q},\tilde{p})$. 
Now by considering $T^*$ as a initial time and by 
local existence argument, we get contradiction to maximality of $T^*$.

\noindent{\bf Local stability:} Let $f_k\to f$ in $\tilde{W}_{\LL}^{1,2}(\C^n)$. 
Then for each $T>0$, 
\Bea\Vert e^{-i(t-t_0)\LL}(f-f_k)\Vert_{L^{\gamma}\left(I_T,\tilde{W}_{\LL}^{1,\rho}(\C^n)\right)}
\leq C\Vert f-f_k\Vert_{\tilde{W}_{\LL}^{1,2}(\C^n)}\to 0 \mbox{~~as~~} k\to \infty
\Eea 
where $I_T=(t_0-T,t_0+T)$.
Therefore for given $\delta>0$ in (\ref{delta}), choose $T(\delta)$ 
sufficiently small such that
\bea\label{tdelta}\Vert e^{-i(t-t_0)\LL}f\Vert_{L^{\gamma}(I_T,\tilde{W}_{\LL}^{1,\rho})}\leq \frac{\delta}{2}
\eea 
and choose $k$ sufficiently large so that 
\Bea \Vert e^{-i(t-t_0)\LL}(f-f_k)\Vert_{L^{\gamma}\left(I_T,\tilde{W}_{\LL}^{1,\rho}(\C^n)\right)}
\leq C\Vert f-f_k\Vert_{\tilde{W}_{\LL}^{1,2}(\C^n)}\leq \frac{\delta}{2}.
\Eea
Therefore choose $k_0(T)$ so large such that 
\bea\label{kdelta}\Vert e^{-i(t-t_0)\LL}f_k\Vert_{L^{\gamma}\left(I_T,\tilde{W}_{\LL}^{1,\rho}(\C^n)\right)}
\leq \delta
\eea
 for $k\geq k_0(T)$. 

Let $u$ and $\tilde{u_k}$ are solutions corresponding to initial values
$f$ and $f_k$ at time $t_0$ respectively for $k\geq 1$.
In view of estimates (\ref{uclaim1}) and (\ref{uclaim2}), $u,\tilde{u_k}$ will satisfy 
following estimates
\bea\label{uclaim3} 
\Vert u\Vert_{L^{\gamma}\left(I_T,
\tilde{W}_{\LL}^{1,\rho}(\C^n)\right)}&\leq &2\delta\\
\label{uclaim4}\Vert u\Vert_{L^q\left((t_0,t_0+T),\tilde{W}_{\LL}^{1,p}(\C^n)\right)}&<&\infty\\
\label{uclaim5}\sup_{k\geq k_0(T)} \Vert \tilde{u}_k\Vert_{L^{\gamma}\left(I_T,\tilde{W}_{\LL}^{1,\rho}(\C^n)\right)}&\leq &2\delta\\
\label{uclaim6}\sup_{k\geq k_0(T)} \Vert \tilde{u}_k\Vert_{L^{q}\left(I_T,\tilde{W}_{\LL}^{1,p}(\C^n)\right)}&<&\infty
\eea 
where $(q,p)$ be any admissible pair.
Now from Strichartz estimates and Lemma \ref{estimates2},
\Bea \Vert u-\tilde{u_k}\Vert_{L^{\gamma}(I_T,L^{\rho})}&=&\Vert 
\mathcal{H}u-\mathcal{H}\tilde{u_k}\Vert_{L^{\gamma}(I_T,L^{\rho})}\\
&\leq & C\Vert f-f_k\Vert_{\tilde{W}_{\LL}^{1,2}(\C^n)}
+C\Vert G(z,u)-G(z,\tilde{u_k})\Vert_{L^{\gamma'}(I_T,L^{\rho'})}\\
&\leq & C\Vert f-f_k\Vert_{\tilde{W}_{\LL}^{1,2}(\C^n)}+
C\Vert u-\tilde{u_k}\Vert_{L^{\gamma}\left(I_T,L^{\rho}(\C^n)\right)}\times\\
&&\left(\Vert u\Vert_{L^{\gamma}\left(I_T,\tilde{W}_{\LL}^{1,\rho}(\C^n)\right)}
+\Vert \tilde{u_k}\Vert_{L^{\gamma}\left(I_T,\tilde{W}_{\LL}^{1,\rho}(\C^n)\right)} \right)^{\frac{2}{n-1}}.
\Eea
 From (\ref{delta}) and (\ref{uclaim1}), 
$$C\left(\Vert u\Vert_{L^{\gamma}\left(I_T,\tilde{W}_{\LL}^{1,\rho}(\C^n)\right)}
+\Vert \tilde{u_k}\Vert_{L^{\gamma}\left(I_T,\tilde{W}_{\LL}^{1,\rho}(\C^n)\right)} 
\right)^{\frac{2}{n-1}}\leq \frac{1}{2}.$$
Therefore $\Vert u-\tilde{u_k}\Vert_{L^{\gamma}(I_T,L^{\rho})}
\leq 2C\Vert f-f_k\Vert_{\tilde{W}_{\LL}^{1,2}(\C^n)}\to 0$ as $k\to \infty$.
Since $\{\tilde{u_k}\}$ is a bounded sequence in 
$L^{\gamma}\left(I_T,\tilde{W}_{\LL}^{1,\rho}(\C^n)\right)$, therefore 
from Lemma \ref{estimates2} with $m=0$,  
$\Vert G(z,u(z,t))
-G(z,\tilde{u_k}(z,t))\Vert_{L^{\gamma'}\left(I_T,L^{\rho'}(\C^n)\right)}\to 0$ 
as $j\to \infty$.
Since  $\mathcal{H}u=u,\mathcal{H}\tilde{u}_k=\tilde{u}_k$, therefore from Strichartz estimates, $\Vert u-\tilde{u_k}
\Vert_{L^{q}\left(I_T,L^{p}(\C^n)\right)}\to 0$ as $k\to \infty$ for every admissible pair $(q,p)$.

Note that $(\partial_{x_j}-\frac{iy_j}{2})=\frac{1}{2}(Z_j-\overline{Z}_j)$ and 
$(\partial_{y_j}+\frac{ix_j}{2})=\frac{i}{2}(Z_j+\overline{Z}_j)$.
For $S=(\partial_{x_j}-\frac{iy_j}{2}),(\partial_{y_j}+\frac{ix_j}{2})$ and using the notation 
$\psi_{(k)}= \psi\left(z,|\tilde{u}_k(z,t)|\right)$ 
(see (4.17) in \cite{pkrvks}), we have 
\begin{equation}
\begin{aligned}\label{stability}
S(G_{(k)}-G)=\psi_{(k)} S(\tilde{u}_k-u)+(\psi_{(k)}-\psi)Su+(\partial_j\psi_{(k)}) 
(\tilde{u}_k-u)\\
&\hspace{-8.6cm}+(\partial_j\psi_{(k)}-\partial_j\psi)u
+(\partial_{2n+1}\psi_{(k)}) \tilde{u}_{k}\Re(\frac{\overline{\tilde{u}_k}}{|\tilde{u}_k|}S(\tilde{u}_k-u))\\
&\hspace{-8.6cm}+(\partial_{2n+1}\psi_{(k)}) \tilde{u}_k\Re(\frac{\overline{\tilde{u}_k}}
{|\tilde{u}_k|}Su)-(\partial_{2n+1}\psi) u\Re(\frac{\overline{u}}{|u|}Su)
\end{aligned}
\end{equation}
where $\partial_j= \partial_{x_j}$ for $S=(\partial_{x_j}-\frac{iy_j}{2})$ and 
$\partial_j= \partial_{y_j}$ for $S=(\partial_{y_j}+\frac{ix_j}{2}),~1\leq j\leq n$. 

Using the assumption (\ref{nlc2}) on $\psi$,  
Lemma \ref{embedding1}, and by similar computations as used in Lemma \ref{estimates2}
and Proposition \ref{forstability1},
we have 
$$
\Vert\psi_{(k)} S(\tilde{u}_k-u)\Vert_{L^{\gamma'}\left(I_{T},L^{\rho'}\right)} \leq 
C\Vert S(\tilde{u}_k-u)\Vert_{L^{\gamma}\left(I_{T},L^{\rho}\right)} \Vert \tilde{u}_k\Vert_{L^{\gamma}\left(I_{T},\tilde{W}_{\LL}^{1,\rho}\right)}^{\frac{2}{n-1}}
$$
$$
\hspace*{-1cm} \Vert(\partial_j\psi_{(k)})(\tilde{u}_k-u)\Vert_{L^{\gamma'}\left(I_{T},L^{\rho'}\right)} 
\leq C\Vert \tilde{u}_k-u\Vert_{L^{\gamma}\left(I_{T},L^{\rho}\right)} \Vert \tilde{u}_k\Vert_{L^{\gamma}\left(I_{T},\tilde{W}_{\LL}^{1,\rho}\right)}^{\frac{2}{n-1}}
$$
\begin{equation*}
\begin{split}
&\Vert (\partial_{2n+1}\psi_{(k)}) \tilde{u}_k\Re(\frac{\overline{\tilde{u}_k}}{|\tilde{u}_k|}S(\tilde{u}_k-u)) 
\Vert_{L^{\gamma'}\left(I_{T},L^{\rho'}\right)} \\
& \hspace{5cm} \leq  C\Vert S(\tilde{u}_k-u)\Vert_{L^{\gamma}\left(I_{T},L^{\rho}\right)}
\Vert \tilde{u}_k\Vert_{L^{\gamma}\left(I_{T},\tilde{W}_{\LL}^{1,\rho}\right)}^{\frac{2}{n-1}}.
\end{split}
\end{equation*}

Since  $\Vert \tilde{u}_k-u\Vert_{L^{\gamma}(I_{T},L^{\rho}(\C^n))}\rightarrow 0$ and $\{\tilde{u}_k\}$
is a bounded sequence in $L^{\gamma}(I_{T},\tilde{W}_{\LL}^{1,\rho})$, 
therefore by second inequality in the above estimates, 
$(\partial_j\psi_{(k)})(\tilde{u}_k-u)\to 0$ as $k\to \infty$ in 
$L^{\gamma'}\left(I_{T},L^{\rho'}(\C^n)\right)$.
Since $G$ is $C^1$, so in view of the condition (\ref{nlc2}) on $\psi$ 
and Proposition \ref{forstability1}, the sequences 
$(\psi_{(k)}-\psi)Su, , (\partial_j\psi_{(k)}-\partial_j\psi)u$ and 
$(\partial_{2n+1}\psi_{(k)}) \tilde{u}_k\Re(\frac{\overline{\tilde{u}_k}}{|\tilde{u}_k|}Su)-
(\partial_{2n+1}\psi) u\Re(\frac{\overline{u}}{|u|}Su)$ converges to zero 
in $L^{\gamma'}(I_{\tau},L^{\rho'})$ as $k\rightarrow \infty$.
Using these observations in (\ref{stability}), we get
\begin{eqnarray*}
\Vert S(G_{(k)}-G)\Vert_{L^{\gamma'}(I_{T},L^{\rho'})}  \leq &\!\!\!\!\!  C\Vert \tilde{u}_k\Vert_{L^{\gamma}\left(I_{T},\tilde{W}_{\LL}^{1,\rho}\right)}^{\frac{2}{n-1}} 
\Vert S(\tilde{u}_k-u)\Vert_{{L^{\gamma}(I_{T},L^{\rho}(\C^n))}} + a_{k}
\end{eqnarray*}
where $S=(\partial_{x_j}-\frac{iy_j}{2}),(\partial_{y_j}+\frac{ix_j}{2})$ ($1\leq j\leq n$) and $a_{k}\to 0$ as $k\to \infty$. 
Since $(\partial_{x_j}-\frac{iy_j}{2})=\frac{1}{2}(Z_j-\overline{Z}_j)$ and 
$(\partial_{y_j}+\frac{ix_j}{2})=\frac{i}{2}(Z_j+\overline{Z}_j)$, therefore we have
\bea \label{mainstability}\hspace{.4cm}
\Vert G_{(k)}-G\Vert_{L^{\gamma'}(I_{T},\tilde{W}_{\LL}^{1,\rho'})}  &\leq &  C\Vert \tilde{u}_k\Vert_{L^{\gamma}\left(I_{T},\tilde{W}_{\LL}^{1,\rho}\right)}^{\frac{2}{n-1}} 
\Vert \tilde{u}_k-u\Vert_{L^{\gamma}(I_{T},\tilde{W}_{\LL}^{1,\rho})} + a_{k}.
\eea
Now from Strichartz estimates and above estimate, we have
\begin{equation}
\begin{split}\label{usefulforstability}
 \Vert \tilde{u}_k-u\Vert_{L^{\gamma}(I_{T},\tilde{W}_{\LL}^{1,\rho})} \leq C\Vert f_k-f\Vert_{\tilde{W}_{\LL}^{1,2}}
+C \Vert \tilde{u}_k\Vert_{L^{\gamma}\left(I_{T},\tilde{W}_{\LL}^{1,\rho}\right)}^{\frac{2}{n-1}} 
\Vert \tilde{u}_k-u\Vert_{L^{\gamma}(I_{T},\tilde{W}_{\LL}^{1,\rho})} + a_{k}.
\end{split}
\end{equation}
Now we choose $\delta>0$ sufficiently small  such that it satisfies condition (\ref{delta}) and 
\Bea C(2\delta)^{\frac{2}{n-1}}\leq \frac{1}{2}\Eea
where constant $C$ is appearing in the inequality (\ref{usefulforstability}). Note that $T$ depends on 
$\delta$ through (\ref{tdelta}). Therefore from estimates (\ref{uclaim5}) and (\ref{usefulforstability}), we have 
\Bea \Vert \tilde{u}_k-u\Vert_{L^{\gamma}(I_{T},\tilde{W}_{\LL}^{1,\rho})} \leq 2C\Vert f_k-f\Vert_{\tilde{W}_{\LL}^{1,2}}+2a_{k}\to 0
\Eea
as $k\to \infty$. Now from estimates (\ref{mainstability}), (\ref{uclaim3}) and (\ref{uclaim5})
\Bea\Vert G_{(k)}-G\Vert_{L^{\gamma'}(I_{T},\tilde{W}_{\LL}^{1,\rho'})} \to 0
\Eea
as $k\to \infty$.
From Strichartz estimates,  $\Vert \tilde{u}_k-u\Vert_{L^{q}(I_{T},\tilde{W}_{\LL}^{1,p})}
=\Vert \mathcal{H}\tilde{u}_k-\mathcal{H}u\Vert_{L^{q}(I_{T},\tilde{W}_{\LL}^{1,p})}$ $\to 0$ 
as $k\to \infty$
for every admissible pair $(q,p)$.

\noindent{\bf Stability:} Let $(T_{*,k},T_k^{*})$ be the maximal interval for the solutions $\tilde{u}_k$ and 
$I \subset (T_*,T^*)$  be a compact interval. 
The key idea is to extend  the local stability result proved above to the 
interval $I$ by covering it with finitely many intervals obtained by successive application of the above local stability argument. 
This is possible provided $\tilde{u}_k$ is defined on $I$, for all but finitely many $k$. 
In fact, we prove $I \subset (T_{*,k},T_{k}^*)$ for all but finitely many $k$.

Without loss of generality, we assume that $t_0\in I=[a,b]$, and give a proof by the method of contradiction. 
Suppose there exist infinitely many $T_{k_m}^* \leq b$ and let  
$c= \liminf T_{k_m}^*$. Then for $\epsilon>0, ~ [t_0, c-\epsilon] \subset [t_0,T_{k_m}^*)$ 
for all  $k_m$ sufficiently large and $\tilde{u}_{k_m}$ are defined on $ [t_0, c-\epsilon]$. 

By compactness, the local stability
result proved above can be extended to the interval $[t_0, c-\epsilon]$.

 For given $\delta>0$, 
choose $\epsilon>0$ sufficiently small such that 
\Bea \Vert e^{-i\left(t-(c-\epsilon)\right)\LL}u(\cdot,c-\epsilon)-
e^{-i\left(t-(c-\epsilon)\right)\LL}u(\cdot,c)\Vert_{L^{\gamma}\left((c-\epsilon,c+\epsilon),\tilde{W}_{\LL}^{1,\rho}\right)}\\
\leq C\Vert u(\cdot,c-\epsilon)-u(\cdot,c)\Vert_{\tilde{W}_{\LL}^{1,2}}
\leq  \frac{\delta}{6}\\
\Vert e^{-i\left(t-(c-\epsilon)\right)\LL}u(\cdot,c)-e^{-i(t-c)\LL}u(\cdot,c)\Vert_{L^{\gamma}\left((c-\epsilon,c+\epsilon),\tilde{W}_{\LL}^{1,\rho}\right)}\\
\leq C\Vert e^{-i\epsilon t\LL}u(\cdot,c)-u(\cdot,c)\Vert_{\tilde{W}_{\LL}^{1,2}}\leq  \frac{\delta}{6}\\
\Vert e^{-i(t-c)\LL}u(\cdot,c)\Vert_{L^{\gamma}\left((c-\epsilon,c+\epsilon),\tilde{W}_{\LL}^{1,\rho}\right)}\leq \frac{\delta}{6}.
\Eea
Now we choose $k_0(\epsilon)$ such that following estimate holds for all $k\geq k_0$
\Bea &\Vert e^{-i\left(t-(c-\epsilon)\right)}\tilde{u}_{k_m}(\cdot,c-\epsilon)-e^{-i\left(t-(c-\epsilon)\right)}u(\cdot,c-\epsilon)\Vert_{L^{\gamma}\left((c-\epsilon,c+\epsilon),\tilde{W}_{\LL}^{1,\rho}\right)}\\
&\leq C\Vert \tilde{u}_{k_m}(\cdot,c-\epsilon)-u(\cdot,c-\epsilon)\Vert_{\tilde{W}_{\LL}^{1,2}}\leq  \frac{\delta}{2}.
\Eea

Therefore $\Vert e^{-i\left(t-(c-\epsilon)\right)}\tilde{u}_{k_m}(\cdot,c-\epsilon)
\Vert_{L^{\gamma}\left((c-\epsilon,c+\epsilon),\tilde{W}_{\LL}^{1,\rho}\right)}\leq \delta$ 
for all $k_m\geq k_0$. Now by local existence argument (see (\ref{delta1})), $\tilde{u}_{k_m}$ is
defined on $(t_0,c+\epsilon)$  and therefore $T_{k_m}^*\geq c+\epsilon$ for 
all $k_m\geq k_0$,  hence contradicts the fact that $ \liminf T_{k_m}^*=c.$

Similarly we can show that
$[a,t_0] \subset (T_{*,k},t_0]$ for all but finitely many $k$ which completes the proof of stability.

\end{proof}

{\bf Acknowledgements:}  
Author wishes to thank the Harish-Chandra Research institute, the Dept. 
of Atomic Energy, Govt. of India, for providing excellent research facility.

\end{document}